\newtheorem{observation}{Observation}
\newtheorem{question}{Question}
\newtheorem{fact}{Fact}
\newtheorem{theorem}{Theorem}[section]
\newtheorem{corollary}[theorem]{Corollary}
\newtheorem{proposition}[theorem]{Proposition}
\newtheorem{lemma}[theorem]{Lemma}
\newtheorem{remark}[theorem]{Remark}
\def\irr#1{{\rm Irr}(#1)}
\def\irrr#1#2 {\irr {#1 \mid #2}}
\newcommand{\R}{\mathbb R}
\newcommand{\sfe}{{{\mathbb S}^{n-1}}}
\newcommand{\E}{\mathbb E}
\begin{document}

\title[On the maximal perimeter of convex sets with respect to probability measures]{Some remarks about the maximal perimeter of convex sets with respect to probability measures}
\author[Galyna V. Livshyts]{Galyna V. Livshyts}

\address{School of Mathematics, Georgia Institute of Technology} \email{glivshyts6@math.gatech.edu}

\subjclass[2010]{Primary: 52} 
\keywords{Convex bodies, log-concave, isotropic, surface area}
\date{\today}
\begin{abstract}
In this note we study the maximal perimeter of a convex set in $\R^n$ with respect to various classes of measures. Firstly, we show that for a probability measure $\mu$ on $\R^n$, satisfying very mild assumptions, there exists a convex set of $\mu$-perimeter at least $C\frac{\sqrt{n}}{\sqrt[4]{Var|X|} \sqrt{\mathbb{E}|X|}}.$ This implies, in particular, that for any isotropic log-concave measure $\mu$ one may find a convex set of $\mu$-perimeter of order $n^{\frac{1}{8}}$. Secondly, we derive a general upper bound of $Cn||f||^{\frac{1}{n}}_{\infty}$ on the maximal perimeter of a convex set with respect to any log-concave measure with density $f$ in an appropriate position. 

Our lower bound is attained for a class of distributions including the standard normal distribution. Our upper bound is attained, say, for a uniform measure on the cube.

In addition, for isotropic log-concave measures we prove an upper bound of order $n^2$ for the maximal $\mu$-perimeter of a convex set.
\end{abstract}
\maketitle

\section{Introduction}

The surface area (perimeter) of a convex set $Q$ in $\R^n$ with respect to the measure $\mu$ is defined to be
\begin{equation}\label{outer}
\mu^+(\partial Q)=\liminf_{\epsilon\rightarrow +0}\frac{\mu((Q+\epsilon B_2^n)\backslash Q)}{\epsilon}.
\end{equation}

For various probability measures $\mu$, we shall study the quantity 
$$\Gamma(\mu)=\sup_{Q} \mu^+(\partial Q),$$
where the supremum runs over all convex sets in $\R^n.$ We shall also use notation $\Gamma(X)=\Gamma(\mu)$, where $X$ denotes a random vector distributed with respect to $\mu$. In other words,
\begin{equation}\label{def}
\Gamma(X)=\sup_Q\liminf_{\epsilon\rightarrow 0}\frac{P(X\in Q+\epsilon B_2^n)-P(X\in Q)}{\epsilon},
\end{equation}
where the supremum runs over all convex sets $Q$ in $\R^n$. This characteristic measures the sensitivity of the distribution of $X$ with respect to small perturbations. We will use both notations $\Gamma(X)$ and $\Gamma(\mu)$ interchangeably. We remark that this notation follows the notation of Nazarov \cite{fedia}.

Ball showed \cite{ball} that in the case when $\gamma$ is the standard Gaussian measure, $\Gamma(\gamma)\leq Cn^{\frac{1}{4}}$. It was shown by Nazarov \cite{fedia} that this estimate is in fact sharp, and there exists a convex set $Q\subset\R^n$ such that $\gamma^+(\partial Q)\geq 0.28 n^{\frac{1}{4}}$; furthermore, another argument was developed by Nazarov to obtain the upper bound of $0.67 n^{\frac{1}{4}}$, and a sharp estimate in the case of non-standard Gaussian measure was proved there as well. Bentkus \cite{Bent} used the bound on $\Gamma(\gamma)$ for the case of the standard Gaussian measure in order to give bounds on the rates of convergence in the multidimensional central limit theorem. Recently, Raic \cite{raic} extended Nazarov's results and provided further numerical improvements to Benkus's estimate. 

Further, estimates on the perimeter of surfaces given by level sets of polynomials with respect to the standard Gaussian measure were studied by Kane \cite{kane}: a dimension-free bound was obtained there.

The author studied $\Gamma(\mu)$ in \cite{GL}, \cite{GL1} and \cite{GL3}. In \cite{GL} it was shown that for any rotation invariant log-concave random vector $X,$ one has $\Gamma(X)\approx \frac{\sqrt{n}}{\sqrt[4]{Var|X|} \sqrt{\mathbb{E}|X|}};$ here $\approx$ stands for the equality up to a multiple of an absolute constant. In \cite{GL1} it was shown that for measures $\mu_p$ with density $C_{n,p}e^{-\frac{|x|^p}{p}}$ one has $\Gamma(\mu_p)\approx C(p)n^{\frac{3}{4}-\frac{1}{p}}$, for any $p\in (0,\infty)$; note that the case $p\in (0,1)$ corresponds to non-log-concave measures. In \cite{GL3}, the maximal perimeter of convex polytopes with respect to log-concave rotation-invariant measures was estimated.

In this note, we study $\Gamma(X)$ for general classes of distributions. Our first result is the following lower bound estimate.

\begin{theorem}\label{lb}
Let $X$ be a random vector in $\R^n$ with an absolutely continuous distribution. Suppose that $\sqrt{Var(|X|)}\leq \alpha\E|X|$, for $\alpha\in [0,1)$. Then
\begin{equation}\label{lb-eq}
\Gamma(X)\geq C\frac{\sqrt{n}}{\sqrt[4]{Var(|X|)} \sqrt{\mathbb{E}|X|}},
\end{equation}
where $C>0$ depends only on $\alpha.$ Namely, $C=C(\alpha)\rightarrow_{\alpha\rightarrow 0} 0.06$, and $C(\alpha)\rightarrow_{\alpha\rightarrow 1} 0$ .
\end{theorem}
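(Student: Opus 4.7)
The plan is to reduce the statement to the rotation-invariant setting, and then to invoke (a variant of) the construction from \cite{GL} in that setting.

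\textbf{Step 1 (Symmetrization).} For every orthogonal $T \in O(n)$ and every convex body $Q$, by $T$-invariance of Hausdorff measure,
\begin{equation*}
\mu^+(\partial(T^{-1}Q)) \;=\; \int_{\partial Q} f(T^{-1}x)\, d\mathcal{H}^{n-1}(x).
\end{equation*}
Averaging over Haar measure on $O(n)$ and applying Fubini, the right-hand side becomes $\int_{\partial Q} \bar f\, d\mathcal{H}^{n-1} = \bar\mu^+(\partial Q)$, where $\bar f(x)=\int_{\sfe} f(|x|\theta)\,d\sigma(\theta)$ is the radial average of $f$. Since some $T^\ast$ realizes at least the mean, $\Gamma(\mu) \geq \bar\mu^+(\partial Q)$ for every $Q$, whence $\Gamma(\mu) \geq \Gamma(\bar\mu)$. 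The measure $\bar\mu$ is rotation invariant and the distribution of $|\bar X|$ equals that of $|X|$, so $\E|\bar X|=\E|X|$ and $Var|\bar X|=Var|X|$.

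\textbf{Step 2 (Choice of radius).} The hypothesis $\sqrt{Var|X|}\leq \alpha\E|X|$ forces $|X|$ to concentrate in a thick shell around $\E|X|$: by Chebyshev, for a suitable absolute $\beta$,
\begin{equation*}
P\bigl(\,|X| \in [\E|X|-\beta\sqrt{Var|X|},\,\E|X|+\beta\sqrt{Var|X|}]\,\bigr) \geq c_1(\alpha)>0,
\end{equation*}
and by pigeonholing on the density $h$ of $|X|$, one produces $R$ in this interval with $h(R)\geq c_2(\alpha)/\sqrt{Var|X|}$. This step replaces the use of log-concavity in \cite{GL} (where log-concavity was the mechanism giving a density lower bound of the mode), so everything in what follows depends on $\alpha$ only through $c_1,c_2$, explaining the behavior of $C(\alpha)$ as $\alpha\to 0$ and $\alpha\to 1$.

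\textbf{Step 3 (Polytope and computation).} Still working with $\bar\mu$, I would take the polytope
\begin{equation*}
Q \;=\; \bigcap_{i=1}^N \{x\in\R^n \colon \langle x,u_i\rangle\leq R\}
\end{equation*}
with $\{u_i\}$ an $\epsilon$-net of $\sfe$, for $\epsilon$ to be optimized. By rotation invariance of $\bar f = \phi(|\cdot|)$ with $\phi(r)=h(r)/(n\omega_n r^{n-1})$, the contribution of the $i$-th facet $F_i$ (a region of $u_i^\perp+Ru_i$) can be written, after a polar change of variables $u=\sqrt{R^2+s^2}$, as
\begin{equation*}
\int_{F_i} \bar f\,d\mathcal{H}^{n-1}
\;=\; \frac{(n-1)\omega_{n-1}}{n\omega_n}\!\int_R^{\sqrt{R^2+r_i^2}} \frac{h(u)(u^2-R^2)^{(n-3)/2}}{u^{n-2}}\,du,
\end{equation*}
with $r_i\approx R\epsilon$ the effective face radius. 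Summing over $N\asymp \sqrt{n}/\epsilon^{n-1}$ facets and using $\omega_{n-1}/(n\omega_n)\asymp \sqrt{1/(2\pi n)}$, the total perimeter reduces to a one-dimensional integral against $h$. Inserting the lower bound $h(R)\gtrsim 1/\sqrt{Var|X|}$ from Step 2 and optimizing $\epsilon$ so that $r_i^2/R$ matches the concentration scale $\sqrt{Var|X|}$ of $h$ around $R$, the expression becomes $\gtrsim \sqrt{n}/\bigl(\sqrt[4]{Var|X|}\sqrt{\E|X|}\bigr)$, which gives the claim.

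\textbf{Main obstacle.} The delicate point is Step 3: balancing $R$, $\epsilon$ and $N$ so that (i) the facets are large enough to collect a non-negligible portion of $h$ near $R$ but small enough that the $\epsilon$-net argument is valid, and (ii) the integral of $(u-R)^{(n-3)/2}$ against $h$ is not washed out by Stirling factors. Tracking the explicit constants through this optimization is what produces the limit $C(\alpha)\to 0.06$ as $\alpha\to 0$, and the degeneration $C(\alpha)\to 0$ as $\alpha\to 1$ (where the Chebyshev step in Step 2 trivializes).
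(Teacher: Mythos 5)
Your approach is genuinely different from the paper's. The paper takes a random polytope $Q=\cap_{i=1}^N\{y:\langle Y_i,y\rangle\leq\rho\}$ with i.i.d.\ standard Gaussian $Y_i$, computes $\E_Y\,\mu^+(\partial Q)$ using independence and the fact that $\langle X/|X|, Y_j\rangle\sim N(0,1)$, and then conditions on the shell event $|X|\in[\E|X|-\beta W,\,\E|X|+\beta W]$ via Markov to extract the thin-shell scale $\sqrt{Var|X|}$. You instead symmetrize first (showing $\Gamma(\mu)\geq\Gamma(\bar\mu)$ for the radial average $\bar\mu$, which is clean and correct) and then build a deterministic $\epsilon$-net polytope in the rotation-invariant setting along the lines of \cite{GL}. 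Your Step 1 is a nice observation that is not explicit in the paper, though the paper's random Gaussian construction morally exploits the same $O(n)$-invariance.

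There is, however, a real gap between your Step 2 and Step 3. Step 2 produces only a \emph{pointwise} lower bound $h(R)\gtrsim 1/\sqrt{Var|X|}$ from pigeonholing the density over the Chebyshev shell. But Step 3 requires a lower bound on the integral
\begin{equation*}
\int_R^{\sqrt{R^2+r_i^2}} \frac{h(u)\,(u^2-R^2)^{(n-3)/2}}{u^{n-2}}\,du,
\end{equation*}
and a pointwise bound at $u=R$ says nothing about this: a density $h$ that spikes at $R$ and is tiny on $(R,\sqrt{R^2+r_i^2}]$ satisfies Step 2 while making the facet contribution arbitrarily small. Worse, the weight $(u^2-R^2)^{(n-3)/2}$ vanishes at $u=R$ and is concentrated near the \emph{right} endpoint $\sqrt{R^2+r_i^2}$, so the natural replacement — pigeonhole to find a sub-interval $I$ of length $\asymp r_i^2/R$ with $\int_I h\gtrsim |I|/W$ and align $R$ so that $I$ sits where the weight is non-negligible — requires real care about where $I$ lands relative to $R$. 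In \cite{GL} this is handled by log-concavity/rotation-invariance of the specific family, which you correctly observe you must replace; the fix is plausible but not provided. The paper sidesteps this entirely: after (\ref{step3}), all factors $\frac{\sqrt{n}}{|X|}e^{-\rho^2/2|X|^2}$ and $(1-\frac{|X|}{\sqrt{2\pi}\rho}e^{-\rho^2/2|X|^2})^{N-1}$ are monotone in $|X|$ on the relevant range, so conditioning on the shell event and plugging in the worst-case $|X|=\E|X|\pm\beta W$ is immediate — no interaction between the scale of a facet and the local regularity of $h$ is needed. Finally, your claim that this route reproduces $C(\alpha)\to 0.06$ is unjustified: that particular number comes from optimizing the paper's expression $\frac{1}{2e\sqrt e}(1-\beta^{-2})\beta^{-1/2}$ at $\beta=\sqrt 5$, and there is no reason the $\epsilon$-net bookkeeping would land on the same constant.
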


As was pointed out earlier, this lower bound is sharp, in particular, for all log-concave rotation invariant random vectors. It is, however, not always sharp: for example, if $X$ is distributed uniformly on the cube of unit volume, then $\Gamma(X)=2n$, while the right hand side of (\ref{lb-eq}) is of order $n^{\frac{1}{4}}$.

Note that the left hand side of (\ref{lb-eq}) is shift-invariant, while the right-hand side is not, and therefore one might further strengthen the statement by considering optimizing by shifts. Furthermore, a more general statement, Theorem \ref{general} shall be formulated in Section 2.

Recall that a measure $\mu$ on $\R^n$ is called log-concave if for any pair of Borel measurable sets $A,B\subset\R^n$, and any $\lambda\in [0,1]$, one has $\mu(\lambda A+(1-\lambda)B)\geq \mu(A)^{\lambda}\mu(B)^{1-\lambda}.$ Recall also that a probability measure $\mu$ is called isotropic if its covariance matrix is identity, and the barycenter is at the origin.

The quantity $\sqrt{Var(|X|)}$ is of central importance in the study of the geometry of log-concave measures. The celebrated Thin Shell Conjecture (Anttila, Ball, Perissinaki \cite{ABP}, Bobkov, Koldobsky \cite{BK}) states that for all isotropic log-concave measures one has $Var(|X|)\leq C_0,$ with a dimension-independent constant $C_0$. The strong connection of this question with Bourgain's hyperplane conjecture \cite{bour1}, \cite{bour} was found by Eldan and Klartag \cite{ElKl}. It was shown by Eldan \cite{Eldan} that the Thin Shell Conjecture implies the celebrated KLS-conjecture of Kannan, Lovasz, Simonovits \cite{KLS} up to a polylog. The best currently known bound for $\sqrt{Var(|X|)}$ is of order $n^{\frac{1}{4}}$, as was shown recently by Lee and Vempala \cite{LV}, improving upon the previous record by Guedon and Milman \cite{GM}, which in turn improved upon the results of Fleury \cite{fl} and Klartag \cite{Kl}, \cite{Kl2}.

In view of the result of Lee and Vempala \cite{LV}, and in view of the fact that $\E|X|=(1+o(1))\sqrt{n}$ for an isotropic log-concave vector $X,$ we formulate the following corollary of Theorem \ref{lb}:  

\begin{observation} For any isotropic log-concave vector in $\R^n$, $\Gamma(X)\geq Cn^{\frac{1}{8}},$ for an absolute constant $C.$
\end{observation}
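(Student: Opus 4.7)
The observation is an immediate consequence of Theorem \ref{lb}, and the plan is to simply verify the hypothesis and plug in known estimates for isotropic log-concave vectors. Two classical facts about an isotropic log-concave random vector $X$ in $\R^n$ are needed. First, $\E|X|=(1+o(1))\sqrt{n}$; in fact $|\E|X|-\sqrt{n}|$ is bounded by an absolute constant, which follows from the Borell-type concentration of $|X|^2$ around its mean $n=\trace(\text{Cov}(X))$. Second, the Lee--Vempala thin-shell bound \cite{LV} gives $\sqrt{Var(|X|)}\le C\,n^{1/4}$ for an absolute constant $C$.

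Combining the two facts yields $\sqrt{Var(|X|)}/\E|X|\le C' n^{-1/4}$. For $n$ larger than some absolute threshold $n_0$, this ratio is below any prescribed $\alpha<1$ (for concreteness one may fix $\alpha=1/2$), so the hypothesis of Theorem \ref{lb} holds and the constant $C(\alpha)$ appearing there is bounded below by a fixed positive number. Applying \eqref{lb-eq},
\begin{equation*}
\Gamma(X)\ge C(\alpha)\,\frac{\sqrt{n}}{(Var(|X|))^{1/4}(\E|X|)^{1/2}}\ge c\,\frac{\sqrt{n}}{n^{1/8}\cdot n^{1/4}}=c\, n^{1/8}
\end{equation*}
for $n\ge n_0$, using $(Var(|X|))^{1/4}\le C^{1/2}n^{1/8}$ and $(\E|X|)^{1/2}\le C'' n^{1/4}$.

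It remains to handle the finitely many dimensions $n<n_0$; there one argues separately that $\Gamma(X)\ge c>0$ is automatic (for instance, for any isotropic log-concave $X$ one has a pointwise upper bound on the density at some fixed point on, say, the coordinate axis, and the boundary of a suitable half-space has positive $\mu$-perimeter bounded below by an absolute constant). Adjusting $c$ downward if necessary gives the claimed inequality $\Gamma(X)\ge C n^{1/8}$ for all $n$ and for an absolute constant $C>0$.

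There is no real obstacle here; the only subtle point is making sure the constant $\alpha$ produced by the Lee--Vempala bound is bounded away from $1$, so that Theorem \ref{lb} can be invoked with a uniform choice of constant, and then absorbing the small-dimensional regime into the absolute constant.
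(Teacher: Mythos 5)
Your argument is correct and follows the same route as the paper: verify the hypothesis of Theorem \ref{lb} via the Lee--Vempala thin-shell bound $\sqrt{Var(|X|)}\le Cn^{1/4}$ and the standard estimate $\E|X|=(1+o(1))\sqrt n$, then substitute into \eqref{lb-eq}. The paper states the corollary informally and does not explicitly dispatch the finitely many small dimensions; your observation that $\Gamma(X)\ge c>0$ for every isotropic log-concave $X$ (e.g.\ via the perimeter of a half-space through the barycenter, whose $\mu$-perimeter is the marginal density at $0$, bounded below for a variance-one log-concave marginal) is a reasonable way to make that step airtight.
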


In addition, we would like to point out the following, perhaps rather hypothetical

\begin{observation} In case there existed a log-concave isotropic measure $\mu$ for which 
$$\mu^+(\partial Q)\leq o(1)\cdot n^{\frac{1}{4}}$$ 
for all convex sets $Q$ in $\R^n$, then the Thin Shell Conjecture would not be true, and this measure would be a counter-example.
\end{observation}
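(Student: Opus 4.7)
The plan is to argue by contradiction, directly plugging a hypothetical measure into the lower bound of Theorem \ref{lb}. Concretely, suppose that for each sufficiently large $n$ there exists an isotropic log-concave probability measure $\mu_n$ on $\R^n$ (with random vector $X_n$) satisfying $\Gamma(\mu_n)\leq \e_n\, n^{1/4}$ for some sequence $\e_n\to 0$. The goal is to show that necessarily $\mathrm{Var}(|X_n|)\to\infty$, which directly contradicts the Thin Shell Conjecture.

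The first step is to verify that the hypothesis $\sqrt{\mathrm{Var}(|X_n|)}\leq \alpha\, \E|X_n|$ of Theorem \ref{lb} is satisfied with some fixed $\alpha<1$ for all large $n$. Since $X_n$ is isotropic, $\E|X_n|^2=n$, hence $\mathrm{Var}(|X_n|)=n-(\E|X_n|)^2$. Combined with the classical (Paouris-type) fact $\E|X_n|=(1-o(1))\sqrt{n}$ for isotropic log-concave vectors which is quoted in the paper, this yields $\mathrm{Var}(|X_n|)=o(n)$ and in particular $\sqrt{\mathrm{Var}(|X_n|)}/\E|X_n|\to 0$. Thus for large $n$ the hypothesis of Theorem \ref{lb} holds with $\alpha$ as small as we wish, and the constant $C(\alpha)$ there is bounded away from zero.

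The second step is to apply Theorem \ref{lb} and rearrange. Using $\E|X_n|=(1+o(1))\sqrt{n}$, we get
\[
\e_n\, n^{1/4} \;\geq\; \Gamma(X_n) \;\geq\; C\,\frac{\sqrt{n}}{\sqrt[4]{\mathrm{Var}(|X_n|)}\,\sqrt{\E|X_n|}} \;=\; \frac{C(1+o(1))\, n^{1/4}}{\sqrt[4]{\mathrm{Var}(|X_n|)}}.
\]
Rearranging gives $\sqrt[4]{\mathrm{Var}(|X_n|)} \geq C(1+o(1))/\e_n$, so $\mathrm{Var}(|X_n|)\to\infty$. This contradicts the Thin Shell Conjecture, which asserts an absolute-constant bound on $\mathrm{Var}(|X_n|)$ for isotropic log-concave measures, and thereby exhibits $\mu_n$ (for large $n$) as a counterexample.

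The proof is really just a direct consequence of Theorem \ref{lb} together with the standard concentration statement $\E|X|=(1-o(1))\sqrt{n}$ for isotropic log-concave vectors. The only mildly delicate point is making sure the hypothesis $\sqrt{\mathrm{Var}(|X|)}\leq \alpha\E|X|$ of Theorem \ref{lb} is available without circularly invoking the Thin Shell Conjecture itself; this is precisely handled by the identity $\mathrm{Var}(|X_n|)=n-(\E|X_n|)^2$ together with the unconditional lower bound on $\E|X_n|$.
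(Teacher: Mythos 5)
Your proof is correct and follows essentially the same line as the argument implicit in the paper, namely plugging directly into Theorem \ref{lb} and using $\E|X|=(1+o(1))\sqrt{n}$. Your extra care in checking the hypothesis $\sqrt{\mathrm{Var}(|X|)}\le\alpha\,\E|X|$ non-circularly, via the isotropicity identity $\mathrm{Var}(|X|)=n-(\E|X|)^2$ together with the unconditional (proven, non-conjectural) thin-shell estimates behind $\E|X|=(1+o(1))\sqrt{n}$, is a worthwhile detail that the paper leaves implicit.
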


In other words, in case the Thin Shell Conjecture holds, then the Gaussian measure is among the minimizers of $\Gamma(\mu)$ in the class of isotropic log-concave measures, up to a constant multiple.

Lastly, we point out another class of measures (in addition to the rotation invariant log-concave ones), for which the lower bound from Theorem \ref{lb} is sharp. Recall that a measure is called unconditional if it is invariant under the reflections with respect to coordinate hyperplanes.

\begin{observation}\label{hessian}
Suppose $\mu$ is an isotropic log-concave unconditional measure on $\R^n$ with twice differentiable density $f$, such that
\begin{equation}\label{hess}
0\leq -\nabla^2 \log f\leq Id,
\end{equation}
in the matrix sense. Then there exist absolute constants $C_1\geq C_2>0$ such that
\begin{equation}\label{concl} 
C_2n^{\frac{1}{4}}\leq \Gamma(\mu)\leq C_1 n^{\frac{1}{4}}.
\end{equation}
\end{observation}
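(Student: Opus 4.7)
My plan is to prove the two bounds in \eqref{concl} separately, the lower bound being a straightforward application of Theorem~\ref{lb} and the upper bound requiring adaptation of the Ball--Nazarov $n^{1/4}$ estimate for the Gaussian.

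For the lower bound, isotropy gives $\E|X|^2=n$, so $\E|X|\leq\sqrt n$; log-concavity yields $\E|X|\geq c\sqrt n$ as well, in particular $\sqrt{\E|X|}$ is of order $n^{1/4}$. By Klartag's thin-shell theorem for unconditional isotropic log-concave measures, $\mathrm{Var}(|X|)$ is bounded by a universal constant. This verifies the hypothesis $\sqrt{\mathrm{Var}(|X|)}\leq\alpha\E|X|$ of Theorem~\ref{lb} with $\alpha=O(n^{-1/2})$, so the theorem applies and yields
$$
\Gamma(\mu)\;\geq\; C\,\frac{\sqrt n}{\sqrt[4]{\mathrm{Var}(|X|)}\,\sqrt{\E|X|}}\;\geq\; C_2\,n^{1/4}.
$$

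For the upper bound, set $V:=-\log f$. Unconditional symmetry forces $\nabla V(0)=0$, while \eqref{hess} says that $V-\tfrac12|x|^2$ is concave; combined, these give the pointwise inequality $f(x)\geq f(0)e^{-|x|^2/2}$, and integrating against $dx$ shows $\|f\|_\infty=f(0)\leq(2\pi)^{-n/2}$. Given a convex body $Q\subset\R^n$, I express $\mu^+(\partial Q)=\int_{\partial Q}e^{-V}\,d\h$ and apply the divergence theorem to a suitable vector field such as $F(x)=e^{-V(x)}\nabla V(x)$ (possibly after translating so that a natural centre of $Q$ sits at the origin), obtaining
$$
\int_{\partial Q} e^{-V}\langle\nabla V,n_x\rangle\,d\h \;=\; \int_Q e^{-V}\bigl(\Delta V-|\nabla V|^2\bigr)\,dx.
$$
A Cauchy--Schwarz-type inequality then reduces the full perimeter $\int_{\partial Q}e^{-V}\,d\h$ to the above. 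The inputs $\Delta V\leq n$ (from \eqref{hess}), the identity $\int|\nabla V|^2\,d\mu=\int\Delta V\,d\mu\leq n$, and the thin-shell concentration of $\mu$ near the sphere of radius $\sqrt n$ then drive an estimate of the resulting volume integral strictly parallel to Nazarov's Gaussian argument, delivering $\mu^+(\partial Q)\leq C_1 n^{1/4}$.

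The main obstacle is the upper bound: the Ball--Nazarov Gaussian proof leans on the pointwise identity $\nabla^2 V=Id$, whereas here one has only the matrix inequality \eqref{hess}. The non-Gaussian factor $e^{-V(x)+|x|^2/2}$ is log-convex by \eqref{hess} but not constant, and controlling its effect on $\partial Q$ and in the volume integral while preserving the $n^{1/4}$ scaling requires essential use of unconditional symmetry and of the isotropic identities above. Once that bookkeeping is in place, the remaining steps mirror Nazarov's proof in the Gaussian case.
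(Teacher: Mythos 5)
Your lower bound argument is correct and coincides with the paper's: Klartag's thin-shell estimate for unconditional isotropic log-concave measures gives $\mathrm{Var}(|X|)\leq C_0$, isotropy gives $\E|X|=(1+o(1))\sqrt n$, and Theorem~\ref{lb} yields the $n^{1/4}$ lower bound. That part is fine.

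The upper bound is where your proposal diverges, and where a genuine gap remains. The paper does not adapt Nazarov's integral-geometric argument to the density $f$ directly. Instead it invokes Caffarelli's contraction theorem: the Hessian condition \eqref{hess} implies that the Brenier map transporting $\mu$ to the standard Gaussian $\gamma$ is the gradient of a convex function and is $1$-Lipschitz. If $T=\nabla\Phi$ is this map, then for any convex $Q$ the image $K=T(Q)$ is again convex (because $\nabla\Phi$ maps convex sets to convex sets when $\Phi$ is convex), and the $1$-Lipschitz property together with the mass-transport identity gives $\mu^+(\partial Q)\leq\gamma^+(\partial K)$. The bound then follows from Ball--Nazarov applied to $\gamma$, with no further analysis of $f$ required. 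Your sketch, by contrast, leaves the crucial step unaddressed: you observe that Nazarov's Gaussian proof uses the exact identity $\nabla^2 V=\mathrm{Id}$, whereas here one only has the matrix inequality, and you state that ``once that bookkeeping is in place'' the rest goes through --- but that bookkeeping \emph{is} the theorem. The non-Gaussian factor $e^{-V(x)+|x|^2/2}$ is log-convex and unbounded, and it is not at all clear how to control it on $\partial Q$ without losing the $n^{1/4}$ scaling. Indeed the paper explicitly remarks that the direct adaptation of Nazarov's argument is only known to succeed ``at least in the case of product measures,'' which is strictly narrower than the unconditional class considered here. So the transportation route is not merely cleaner; it is, as far as the paper records, the only known way to reach the general statement. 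Your divergence-theorem identity and the auxiliary facts ($\nabla V(0)=0$, $\|f\|_\infty\leq(2\pi)^{-n/2}$, $\Delta V\leq n$) are correct, but they do not by themselves close the argument, and the Cauchy--Schwarz reduction you allude to is precisely the place where Nazarov's proof uses $\nabla^2 V=\mathrm{Id}$ in an essential way.
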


Indeed, recall that
$$\mu^+(Q)=\int_{\R^n} |\nabla 1_Q(x)| d\mu(x),$$
where the equality is understood in the sense of smooth approximation of $1_Q$. The celebrated theorem of Caffarelli \cite{caf} states, in particular, that the assumption (\ref{hess}) implies that there exists a contraction map transporting $\mu$ onto the standard Gaussian measure $\gamma$, which is a gradient of a convex function (see also Theorem 2.2 and Corollary 6.4 in the nice survey by Kolesnikov \cite{Koles}, where the most general statements can be found). Therefore, a standard transportation argument shows that for every convex set $Q$ there exists a convex set $K$, such that $\mu^{+}(\partial Q)\leq \gamma^+(\partial K)$, where $\gamma$ is the Gaussian measure. The upper bound hence follows from the estimate of Ball \cite{ball} and Nazarov \cite{fedia}, discussed earlier. As for the lower estimate, Klartag \cite{Kl3} showed that $Var(|X|)\leq C_0$ in the case of unconditional log-concave distributions, and therefore, in view of the fact that $\E |X|=(1+o(1))\sqrt{n}$ for isotropic log-concave measures, Theorem \ref{lb} implies the lower bound in (\ref{concl}). 

\begin{remark} At least in the case of product measures, the upper bound from Observation \ref{hessian} can be also derived by directly adapting the argument of Nazarov \cite{fedia}, without invoking Caffarelli's theorem.
\end{remark}

\medskip
\medskip

Next, we shall switch to the discussion of the upper bound for the quantity $\Gamma(\mu)$ for certain classes of measures. 


We observe that for any $\alpha>0,$ one has from (\ref{def}) that $\Gamma(\alpha X)=\frac{1}{\alpha}\Gamma(X)$. In other words, for any measure $\mu$ with density $f,$ the quantity 
$$||f||^{-\frac{1}{n}}_{\infty}\cdot \Gamma(\mu)$$
is invariant under the transformations of the type $f_{\alpha}(x)=\alpha^n f(\alpha x)$. Therefore, it makes sense to estimate $\Gamma(\mu)$ in terms of $||f||_{\infty}^{\frac{1}{n}}$.

Recall that a density $f$ is called unimodule if all its level sets are convex. Note that the class of unimodule densities includes all log-concave densities. We shall show the following

\begin{theorem}\label{ub}
Let $X$ be a random vector on $\R^n$ with an absolutely continuous unimodule density $f$. Then there exists a linear volume preserving transformation $T$ such that
$$\Gamma(TX)\leq Cn ||f||^{\frac{1}{n}}_{\infty},$$
where $C>0$ is an absolute constant.
\end{theorem}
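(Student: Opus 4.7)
The plan is to combine a layer-cake reduction with a reverse-isoperimetric positioning of the level sets of $f$. Write $M:=\|f\|_\infty$ and $K_t:=\{x\in\R^n:\,f(x)\geq t\}$; unimodality guarantees that every $K_t$ is a convex body, and if $X$ is replaced by $TX$ for $T\in SL(n)$ then the level sets of the density of $TX$ are precisely $\{TK_t\}_{t\in(0,M)}$. For any convex set $Q$, the Minkowski-content representation of the perimeter (smoothing $\1_Q$ and passing to the limit) together with the layer-cake identity $f=\int_0^M \1_{K_t}\,dt$ and Fubini yields
\[
\mu^+(\partial Q) \;=\; \int_{\partial Q} f\,d\mathcal{H}^{n-1} \;=\; \int_0^M \mathcal{H}^{n-1}(\partial Q\cap K_t)\,dt.
\]

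Next I would use a convex-monotonicity argument to remove the dependence on $Q$ from the integrand. Since $Q$ and $K_t$ are both convex, the intersection $Q\cap K_t$ is a convex body whose topological boundary decomposes as $(\partial Q\cap K_t)\cup(Q\cap\partial K_t)$, so in particular $\partial Q\cap K_t\subseteq\partial(Q\cap K_t)$. Surface area is monotone under inclusion among convex bodies (for instance via the Cauchy projection formula, since $\pi_{\theta^\perp}(Q\cap K_t)\subseteq \pi_{\theta^\perp}K_t$ for every direction $\theta\in\sfe$), hence
\[
\mathcal{H}^{n-1}(\partial Q\cap K_t)\;\leq\;\mathcal{H}^{n-1}(\partial(Q\cap K_t))\;\leq\;\mathcal{H}^{n-1}(\partial K_t).
\]
Combining the two displays gives the $Q$-independent bound $\mu^+(\partial Q)\leq \int_0^M \mathcal{H}^{n-1}(\partial K_t)\,dt$, valid for every convex $Q$ in $\R^n$.

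The final step is to choose $T\in SL(n)$ so that this right-hand side is at most $CnM^{1/n}$. Ball's reverse isoperimetric inequality produces, for each individual $K_t$, some $T_t\in SL(n)$ with $\mathcal{H}^{n-1}(\partial(T_tK_t))\leq Cn\,|K_t|^{(n-1)/n}$. Granted a single $T$ that simultaneously places every $K_t$ into such a ``Ball position'' up to an absolute constant, the argument would close via H\"older's inequality with conjugate exponents $n/(n-1)$ and $n$, using the layer-cake identity $\int_0^M|K_t|\,dt=\int_{\R^n} f=1$:
\[
\int_0^M \mathcal{H}^{n-1}(\partial(TK_t))\,dt \;\leq\; Cn\int_0^M |K_t|^{\frac{n-1}{n}}\,dt \;\leq\; Cn\cdot 1^{\frac{n-1}{n}}\cdot M^{1/n} \;=\; CnM^{1/n}.
\]
The main technical obstacle is the simultaneous positioning: Ball's position is intrinsic to each body, so a priori $T_t$ varies with $t$ and different level sets may call for rather different affine images. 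My strategy is to fix $T$ from a canonical reference built out of $f$ (a typical level set $\{f\geq cM\}$ placed in Ball's minimum-surface-area position, or an average of the surface area measures of the $\partial K_t$ on $\sfe$), and then to propagate the resulting bound at bounded loss in the constant to all $t$ using the nesting $K_s\supseteq K_t$ for $s\leq t$ together with the convex surface-monotonicity exploited above.
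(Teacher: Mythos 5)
Your layer-cake reduction and convex monotonicity step are correct, and they reproduce the first half of the paper's Lemma on the matter (the identity $\mu^+(\partial Q)=\int_0^M \mathcal{H}^{n-1}(\partial Q\cap K_t)\,dt$ and the bound $\mathcal{H}^{n-1}(\partial Q\cap K_t)\leq\mathcal{H}^{n-1}(\partial K_t)$). The H\"older calculation at the end is also arithmetically fine, \emph{conditional} on the simultaneous-positioning claim. But that claim is a genuine gap, and the repair you sketch does not close it. Ball's reverse isoperimetric inequality is an intrinsic statement about a single body, and it is not inherited by the nested family $\{K_t\}$: if $T$ puts $K_{t_0}$ in Ball's position, then for $s<t_0$ you have $TK_{t_0}\subset TK_s$, and surface area is \emph{increasing} under inclusion of convex bodies, so the bound on $\mathcal{H}^{n-1}(\partial(TK_{t_0}))$ gives you no information whatsoever about $\mathcal{H}^{n-1}(\partial(TK_s))$ for the \emph{larger} set. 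There is no general way to place all level sets simultaneously in Ball's position up to a constant; a long thin slab $K_s$ nested outside a round $K_{t_0}$ is already a counterexample to the propagation you hope for.

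The paper sidesteps this by trading Ball's reverse isoperimetric inequality for the much cruder but \emph{inclusion-stable} inradius bound: if $RB_2^n+x_0\subset K$ then $\mathcal{H}^{n-1}(\partial K)\leq n|K|/R$. The point is that inradius is monotone under inclusion, so fixing a single threshold $t_0$ and positioning $K_{t_0}$ once controls all the larger level sets at no extra cost: for $s\le t_0$, $K_s\supseteq K_{t_0}$ forces $R_s\geq R_{t_0}$, whence
\[
\int_0^{t_0}\mathcal{H}^{n-1}(\partial(TK_s))\,ds\;\le\;\frac{n}{R_{t_0}(f_T)}\int_0^{t_0}|K_s|\,ds\;\le\;\frac{n\|f\|_1}{R_{t_0}(f_T)},
\]
and for $s\ge t_0$ one uses $K_s\subset K_{t_0}$ and surface-area monotonicity to bound the tail by $(M-t_0)\,\mathcal{H}^{n-1}(\partial(TK_{t_0}))\le \|f\|_\infty\cdot n|K_{t_0}|/R_{t_0}(f_T)$. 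Choosing $t_0$ so that $|K_{t_0}|\cdot\|f\|_\infty\approx 1$ and $T\in SL(n)$ putting $K_{t_0}$ in John's position, Ball's volume ratio estimate $|K_{t_0}|\le C_0^n R_{t_0}^n$ gives $1/R_{t_0}(f_T)\le C_0\|f\|_\infty^{1/n}$, and both pieces are $\lesssim n\|f\|_\infty^{1/n}$. So the correct execution of your ``canonical level set, then propagate'' idea is exactly this split at $t_0$, with the inradius lemma replacing the reverse isoperimetric inequality as the propagating mechanism; Ball's theorem enters only once, through the volume ratio of the John ellipsoid of the one chosen level set, not as a surface-area bound applied to each $K_t$.
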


Let us emphasize here that while $||f||_{\infty}$ is invariant under volume preserving transformations, $\Gamma(TX)$ is not. Theorem \ref{ub} should be considered as a generalization of the result of Ball \cite{Ball}, where it was proved in the case of uniform distributions on convex sets. Our proof also relies on Ball's volume ratio estimate from \cite{ball_cube1}.

We would like to continue the discussion about $\Gamma(\mu)$ for log-concave measures in the isotropic position, and not just the position that arises in Theorem \ref{ub}. For instance, the lower bound from Theorem \ref{lb} is biggest possible when the measure is in the isotropic position (modulo scaling). Some interesting estimates of the reverse isoperimetric type were obtained by Markessinis, Paouris, Saroglou \cite{MPS}, where the authors show that perimeters of convex sets and related quantities may change significantly when the position is changed from isotropic to an optimal one.

Below we state a polynomial upper bound on the maximal perimeter in the isotropic case.

\begin{proposition}\label{ub-lc}
Let $X$ be an isotropic log-concave random vector. Then
$$\Gamma(X)\leq Cn^2,$$
where $C>0$ is an absolute constant.
\end{proposition}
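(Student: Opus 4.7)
My plan is to reduce the statement to Theorem~\ref{ub} by a linear change of variables, combined with the classical Hensley--Ball bound on the isotropic constant.

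First, applying Theorem~\ref{ub} gives a volume-preserving linear map $T$ with $\Gamma(TX) \leq C n \|f\|_\infty^{1/n}$. For an isotropic log-concave density one has $\|f\|_\infty^{1/n} \leq C$ by the classical Hensley--Ball bound on the isotropic constant, and so $\Gamma(TX) \leq Cn$. To transfer this estimate to $X$ itself, I would use that for any invertible linear $T$ and convex $Q$, the inclusion $T B_2^n \subseteq \|T\|_{\mathrm{op}}\, B_2^n$ combined with the identity $\{X \in Q+\e B_2^n\} = \{TX \in TQ + \e T B_2^n\}$ yields
$$\mu_X^+(\partial Q) \leq \|T\|_{\mathrm{op}}\, \mu_{TX}^+(\partial TQ),$$
hence $\Gamma(X) \leq \|T\|_{\mathrm{op}}\, \Gamma(TX)$. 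The proof thus reduces to showing $\|T\|_{\mathrm{op}} \leq Cn$.

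Since $X$ is isotropic, $\mathrm{Cov}(TX) = T T^{T}$, so $\|T\|_{\mathrm{op}}^2$ is the largest eigenvalue of $\mathrm{Cov}(TX)$, in particular bounded by $\trace(T T^{T}) = \E|TX|^2$. The position produced in Theorem~\ref{ub} is, following Ball's volume-ratio strategy, the one placing a suitable level set $K_0$ of $f$ into its John position. John's theorem then gives $K_0 \subseteq n B_2^n$, while the Hensley--Ball bound forces $K_0$ to contain a ball of constant radius; combining with the standard log-concave tail decay (Borell's lemma) propagates this to the full distribution and yields $\E|TX|^2 \leq Cn^2$, hence $\|T\|_{\mathrm{op}} \leq Cn$. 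Combining everything, $\Gamma(X) \leq \|T\|_{\mathrm{op}}\, \Gamma(TX) \leq Cn \cdot Cn = Cn^2$.

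The main obstacle is the last step: quantitatively tracking the specific volume-preserving $T$ produced inside Theorem~\ref{ub} and establishing that its operator norm is at most $Cn$ when the underlying measure is isotropic log-concave. The key tools here are John's theorem applied to a bounded convex level set of $f$ together with Borell-type concentration of log-concave measures, which propagate the control from the level set to the full distribution.
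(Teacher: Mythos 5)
Your reduction $\Gamma(X)\leq \|T\|_{\mathrm{op}}\,\Gamma(TX)$ is correct (the change of variables and $TB_2^n\subseteq \|T\|_{\mathrm{op}}B_2^n$ argument works), but the two remaining ingredients both have genuine problems, and the paper's actual proof avoids this route altogether.

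First, the claim ``$\|f\|_\infty^{1/n}\leq C$ by the classical Hensley--Ball bound'' is in the wrong direction. For an isotropic log-concave density, Hensley and Ball give a \emph{lower} bound $\|f\|_\infty^{1/n}\geq c>0$; an \emph{upper} bound by an absolute constant is precisely Bourgain's hyperplane (slicing) conjecture, which is not a theorem and in particular cannot be invoked here. With the best known bound $\|f\|_\infty^{1/n}\leq L_n$ (e.g.\ $L_n\lesssim n^{1/4}$ at the time of the paper), your first step only gives $\Gamma(TX)\leq Cn L_n$, already worse than what is needed.

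Second, even granting a constant bound on $\|f\|_\infty^{1/n}$, the estimate $\|T\|_{\mathrm{op}}\leq Cn$ is not established by the sketch. After putting the level set $L=TK_0$ in John position, Ball's volume ratio only says the inradius $R$ satisfies $R\geq c|L|^{1/n}\geq c'$; but $R$ can also be as large as $|L|^{1/n}/\omega_n^{1/n}\sim\sqrt{n}$ (e.g.\ a Euclidean ball), so John's theorem gives only $L\subseteq nR\,B_2^n\subseteq Cn^{3/2}B_2^n$. Running the rest of the plan (containment $\frac{1}{10}TB_2^n\subseteq L$, Borell for the tails) then yields at best $\|T\|_{\mathrm{op}}\lesssim n^{3/2}$ and $\Gamma(X)\lesssim n^{5/2}$, not $n^2$.

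The paper's proof does not pass through Theorem~\ref{ub} or any isotropic-constant bound at all. It works directly in the isotropic position using Lemma~\ref{ub-general}: it shows via Klartag's results (Lemma~\ref{klartag}) that the level set $K_s$ with $s=\|f\|_\infty e^{-C_0 n}$ has inradius $\geq 1/10$, then uses the log-concave scaling Lemma~\ref{scaling} with $\lambda=1/n$ to get inradius $\geq 1/(10n)$ for the much larger level set $K_{e^{-C_0}\|f\|_\infty}$, whose volume satisfies the trivial bound $|K_{e^{-C_0}\|f\|_\infty}|\cdot\|f\|_\infty\leq e^{C_0}$ because $|K_t|\cdot t\leq\mu(K_t)\leq 1$. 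This last point is exactly where the isotropic-constant difficulty is sidestepped, and it is the reason the argument closes with $n\cdot 10n\cdot(e^{C_0}+1)=Cn^2$. If you want to keep something like your approach, you would need to replace the Hensley--Ball step by an argument not relying on the hyperplane conjecture, and you would need a genuinely sharper control of $\|T\|_{\mathrm{op}}$ than John's theorem plus volume ratio can provide.
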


In the case when $\mu$ is uniform on an isotropic convex set $K$, one may easily show that the upper bound for $\Gamma(\mu)$ is in fact $Cn$ (see below (\ref{oneline})). 
In addition, analyzing the proof of Theorem \ref{ub}, one may observe that $\Gamma(\mu)\leq Cn$ for isotropic log-concave measures with homogenous level sets, as well as for 1-symmetric log-concave measures. We suspect that the correct bound in Proposition \ref{ub-lc} in general should be $Cn$ rather than $Cn^2$. Such bound would follow from an affirmative answer to Question \ref{question}, formulated in the third section.

In Section 2 we derive Theorem \ref{lb}, as well as additional more general estimates. In Section 3 we derive Theorem \ref{ub} and Proposition \ref{ub-lc}, and make some further observations.

\textbf{Acknowledgement.} The author is supported by the NSF CAREER DMS-1753260. 

\section{Proof of Theorem \ref{lb}.}

We follow the idea from Nazarov \cite{fedia}, where it was applied in the case of Gaussian measure $\mu$. Certain modifications to that argument are made: we shall use properties of independent random variables in a stronger way, which will enable us to formulate a simple proof of the more general statement of Theorem \ref{lb}.

The notation $P(\cdot)$ is used for probability, and $\E$ stands for expectation everywhere below; occasionally, sub-indexes will be used to emphasize the distribution with respect to which the probability or the expectation is taken.

We begin with some technical lemmas.

\begin{lemma}\label{gauss_property}
Let $Y$ be a random vector distributed according to the standard Gaussian distribution. Then, for any $y\in\R^n,$ for any $\rho>0,$
$$\lim_{\epsilon\rightarrow 0} \frac{P_Y(\langle y,Y\rangle\in[\rho,\rho+\epsilon |Y|])}{\epsilon}=\frac{1+o(1)}{\sqrt{2\pi}}\frac{\sqrt{n}}{|y|}e^{-\frac{\rho^2}{2|y|^2}}.$$
\end{lemma}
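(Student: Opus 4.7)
The plan is to reduce to a one-dimensional calculation using the rotational invariance of the Gaussian, and then exploit the concentration of the Euclidean norm around $\sqrt{n}$. The main tool is the decomposition of $Y$ into components parallel and orthogonal to $y$, which are independent Gaussians.

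By invariance of the standard Gaussian under orthogonal rotations, I assume without loss of generality that $y = |y| e_1$, so that $\langle y, Y\rangle = |y| Y_1$. Writing $Y = Y_1 e_1 + Y'$ with $Y_1 \sim N(0,1)$ independent of the standard Gaussian $Y'$ in $\R^{n-1}$, and setting $a := \rho/|y|$, the event $\langle y, Y\rangle \in [\rho, \rho + \epsilon|Y|]$ becomes
\begin{equation*}
a \,\le\, Y_1 \,\le\, a + \frac{\epsilon}{|y|}\sqrt{Y_1^2 + |Y'|^2}.
\end{equation*}

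I condition on $Y'$ and compute the conditional limit using independence. For each fixed value $s = |Y'|$, the upper bound on $Y_1$ forces $Y_1 \to a$ as $\epsilon \to 0$, so $\sqrt{Y_1^2 + s^2} \to \sqrt{a^2 + s^2}$ on the event. Sandwiching the event between $[a, a + \epsilon L_-(s, \epsilon)]$ and $[a, a + \epsilon L_+(s,\epsilon)]$ with $L_\pm(s,\epsilon) \to \sqrt{a^2 + s^2}/|y|$, and using the continuity of the Gaussian density $\phi(t) = (2\pi)^{-1/2} e^{-t^2/2}$, yields
\begin{equation*}
\lim_{\epsilon \to 0} \frac{P(\,\text{event} \mid Y')}{\epsilon} \;=\; \frac{\phi(a)}{|y|}\sqrt{a^2 + |Y'|^2}.
\end{equation*}
A routine dominated-convergence bound (dominating the ratio by, say, $\sqrt{(|a|+1)^2 + |Y'|^2}/|y|$ once $\epsilon$ is small) permits passing the limit under the expectation, giving
\begin{equation*}
\lim_{\epsilon \to 0} \frac{P_Y(\langle y, Y\rangle \in [\rho, \rho + \epsilon|Y|])}{\epsilon} \;=\; \frac{\phi(a)}{|y|}\, \E\sqrt{a^2 + |Y'|^2}.
\end{equation*}

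To finish, I use that $|Y'|^2$ is $\chi^2$-distributed with $n-1$ degrees of freedom and concentrates sharply around $n-1$; a standard estimate then gives $\E\sqrt{a^2+|Y'|^2} = \sqrt{n}\,(1+o(1))$ as $n\to\infty$. Substituting $a = \rho/|y|$ and $\phi(a) = (2\pi)^{-1/2}e^{-\rho^2/(2|y|^2)}$ produces the claimed asymptotic. The only technical wrinkle is the dominated-convergence step, which is easily handled once $\epsilon$ is restricted to a bounded range so that $Y_1$ stays in a fixed neighborhood of $a$; I expect no real obstacle in the argument.
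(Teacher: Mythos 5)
Your argument is correct and takes a genuinely different route from the paper's. The paper decomposes $Y$ into the radial part $|Y|$ and the angular direction $Y/|Y|$, recalls that these are independent and that $\langle y/|y|, Y\rangle\sim N(0,1)$, and then replaces the $|Y|$ governing the width of the strip by an independent copy $|\theta|$, $\theta\sim N(0,I_n)$, arriving at a limit of the form $\frac{\phi(\rho/|y|)}{|y|}\E|\theta|$. You instead split $Y = Y_1 e_1 + Y'$ into the component along $y$ and the orthogonal complement, which are \emph{exactly} independent Gaussians of the right dimensions; conditioning on $Y'$ gives the exact limit $\frac{\phi(a)}{|y|}\E\sqrt{a^2+|Y'|^2}$ with $a = \rho/|y|$ and $|Y'|^2\sim\chi^2_{n-1}$, which is precisely $\frac{\phi(a)}{|y|}\E\bigl[\,|Y|\mid \langle y/|y|,Y\rangle = a\,\bigr]$. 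The paper's formula in effect approximates this conditional expectation by the unconditional $\E|Y|$; the two differ only by a factor $1+o(1)$, so both routes yield the lemma, but your decomposition keeps the independence exact rather than approximate and identifies the true value of the limit.

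One small repair in the dominated-convergence step: the dominating function $\sqrt{(|a|+1)^2+|Y'|^2}/|y|$ rests on the claim that $Y_1$ stays within a fixed neighborhood of $a$ for small $\epsilon$, but for a fixed $\epsilon>0$ the upper endpoint of the $Y_1$-interval on the event is roughly $a+\frac{\epsilon}{|y|}\sqrt{a^2+|Y'|^2}$, which exceeds $a+1$ once $|Y'|$ is large; the confinement is therefore not uniform in $Y'$. This is easy to fix: for $\epsilon\le|y|/2$, solving $b-a=\frac{\epsilon}{|y|}\sqrt{b^2+|Y'|^2}$ for the upper endpoint $b$ shows the interval has length at most $C\frac{\epsilon}{|y|}\bigl(|a|+|Y'|\bigr)$ for an absolute constant $C$, so $C\phi(a)\bigl(|a|+|Y'|\bigr)/|y|$ is a valid dominating function with finite expectation. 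With that correction the dominated-convergence step and the $\chi^2_{n-1}$ concentration estimate go through, and your proof is complete.
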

\begin{proof} Recall that for a standard normal vector $Y$, the random vector $\frac{Y}{|Y|}$ and the random variable $|Y|$ are independent. Recall as well that $\langle \frac{y}{|y|},Y\rangle\sim N(0,1)$, for any $y\in\R^n\setminus 0$. For $y=0$ the conclusion of the lemma is straightforward anyway. Therefore,
\begin{equation}\label{tocont}
\lim_{\epsilon\rightarrow 0} \frac{P_Y(\langle y,Y\rangle\in[\rho,\rho+\epsilon |Y|])}{\epsilon}=\frac{1}{|y|}\lim_{\delta\rightarrow 0} \E_{\theta}\frac{ P_Z(Z\in [\frac{\rho}{y}, \frac{\rho}{|y|}+\delta |\theta|])}{\delta},
\end{equation}
where $Z$ is the standard normal random variable and $\theta$ is the standard normal random vector in $\R^n$ independent of $Z.$ Above we used the change of variables $\delta =\frac{\epsilon}{|y|}$.

Next, by the monotone convergence theorem and Fact \ref{tech1} from appendix, we may exchange $\E_{\theta}$ and the limit, and get that (\ref{tocont}) equals
$$\frac{1}{|y|}E_{\theta} |\theta|\lim_{\nu\rightarrow 0} \frac{P_Z(Z\in [\frac{\rho}{y}, \frac{\rho}{|y|}+\nu])}{\nu},$$
where the change of variable $\nu=\delta |\theta|$ was used. It remains to recall that $E_{\theta}|\theta|=(1+o(1))\sqrt{n}$, and that the density of $Z$ is $\frac{1}{\sqrt{2\pi}}e^{-\frac{s^2}{2}}$, to finish the proof.
\end{proof}

Next, we shall derive a convenient equivalent formulation for the surface area of a polytope.

\begin{lemma}\label{technical}
Fix an integer $N$, a collection of vectors $\{\theta_i\}_{i=1}^N$ and a collection of non-negative real numbers $\{\rho_i\}_{i=1}^N$. Let $Q$ be a polytope given by
$$Q=\cap_{i=1}^N\{y:\,\langle y, \theta_i\rangle\leq \rho_i\},$$ 
and let $\mu$ be any measure on $\R^n$ with bounded absolutely continuous density. Denote by $X$ the random vector distributed according to $\mu$. Then
$$\mu^+(\partial P)=\sum_{i=1}^N\lim_{\epsilon\rightarrow 0} \frac{1}{\epsilon} P_X\left(\langle X,\theta_i\rangle\in[\rho_i,\rho_i+\epsilon |\theta_i|],\,\forall j\neq i:\,\langle X,\theta_j\rangle\leq\rho_j\right).$$ 
\end{lemma}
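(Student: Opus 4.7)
I would prove the lemma by decomposing the parallel body $A_\epsilon := (Q + \epsilon B_2^n) \setminus Q$ and relating it to the candidate sets
$$S_i^\epsilon := \left\{x : \langle x, \theta_i\rangle \in [\rho_i, \rho_i + \epsilon|\theta_i|],\ \forall j \ne i,\ \langle x, \theta_j\rangle \le \rho_j\right\}.$$
The crucial first observation is that the $S_i^\epsilon$ are pairwise disjoint up to a $\mu$-null set: any $x$ simultaneously in $S_i^\epsilon$ and $S_j^\epsilon$ with $i \ne j$ must satisfy both $\langle x, \theta_i\rangle \ge \rho_i$ and $\langle x, \theta_i\rangle \le \rho_i$, hence lies on the hyperplane $\{\langle \cdot, \theta_i\rangle = \rho_i\}$, which has $\mu$-measure zero. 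Consequently $\sum_i \mu(S_i^\epsilon) = \mu\bigl(\bigcup_i S_i^\epsilon\bigr)$.

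Next, I would show that $A_\epsilon$ and $\bigcup_i S_i^\epsilon$ differ by a set of $\mu$-measure $O(\epsilon^2)$. In one direction, any $x \in A_\epsilon$ that is not in any $S_i^\epsilon$ must violate at least two of the half-space constraints defining $Q$; since $d(x,Q) \le \epsilon$ gives $d(x, H_k) \le \epsilon$ for each violated $k$, such $x$ lies in the intersection of two $\epsilon$-slabs $\{\rho_k < \langle x,\theta_k\rangle \le \rho_k + \epsilon|\theta_k|\}$ for two distinct indices. In the other direction, if $x \in S_i^\epsilon \setminus A_\epsilon$, let $y$ be the orthogonal projection of $x$ onto $\{\langle \cdot, \theta_i\rangle = \rho_i\}$; then $|x-y| \le \epsilon$, and since $d(x,Q) > \epsilon$, $y$ must lie outside $Q$, so some $j \ne i$ obeys $\langle y, \theta_j\rangle > \rho_j$. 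Combining with $\langle x,\theta_j\rangle \le \rho_j$ and $|x-y| \le \epsilon$ forces $\langle x,\theta_j\rangle \in (\rho_j - \epsilon|\theta_j|, \rho_j]$, so $x$ again falls in a double-slab corner of pair $(i,j)$. Each such corner region has $\mu$-measure $O(\epsilon^2)$ by a two-dimensional Fubini along the plane $\mathrm{span}(\theta_i, \theta_j)$ together with boundedness of the density.

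Finally, I would apply the coarea formula along the affine map $x \mapsto \langle x,\theta_i\rangle$ (whose gradient has norm $|\theta_i|$) to obtain
$$\mu(S_i^\epsilon) = \frac{1}{|\theta_i|}\int_{\rho_i}^{\rho_i + \epsilon|\theta_i|} \Bigl(\int_{\Sigma_i(s)} f\,d\sigma_{n-1}\Bigr)\, ds, \qquad \Sigma_i(s) := \{x : \langle x,\theta_i\rangle = s,\ \langle x,\theta_j\rangle \le \rho_j\ \forall j \ne i\}.$$
At $s = \rho_i$ we have exactly $\Sigma_i(\rho_i) = F_i$, the facet of $Q$ on $\{\langle \cdot, \theta_i\rangle = \rho_i\}$. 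Right-continuity of $s \mapsto \int_{\Sigma_i(s)} f\,d\sigma_{n-1}$ at $s = \rho_i$, which follows for continuous $f$ (or by dominated convergence on the uniformly varying slices for bounded $f$), yields $\lim_{\epsilon \to 0} \mu(S_i^\epsilon)/\epsilon = \int_{F_i} f\, d\sigma_{n-1}$. Combining with the previous step gives $\mu^+(\partial Q) = \lim_{\epsilon \to 0} \mu(A_\epsilon)/\epsilon = \sum_i \int_{F_i} f\,d\sigma_{n-1} = \sum_i \lim_{\epsilon \to 0} \mu(S_i^\epsilon)/\epsilon$, which is the claim.

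The main obstacle is the second step, specifically the case $x \in S_i^\epsilon \setminus A_\epsilon$: geometrically these points sit near edges where two facets meet, and verifying that they always land in a double-slab corner (rather than somewhere more exotic) is exactly what the projection-and-Lipschitz argument above accomplishes. The disjointness and coarea computations are then essentially bookkeeping.
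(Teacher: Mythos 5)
Your proof is correct, and conceptually it is the same decomposition the paper uses: both identify the $\epsilon$-annulus $A_\epsilon=(Q+\epsilon B_2^n)\setminus Q$ with the union of the slab pieces $F_i^\epsilon$ (your $S_i^\epsilon$) up to an $o(\epsilon)$ error. The difference is the division of labor. The paper simply cites Schneider's book for the assertion that $\lambda$-measure of the discrepancy is $o(\epsilon)$ and then transfers it to any bounded density, whereas you carry out the geometric bookkeeping from scratch: pairwise disjointness of the $S_i^\epsilon$ up to a null set, the two-sided ``double-slab corner'' argument showing $A_\epsilon\,\triangle\,\bigcup_i S_i^\epsilon$ has $\mu$-measure $O(\epsilon^2)$, and the coarea evaluation of $\lim_\epsilon \mu(S_i^\epsilon)/\epsilon = \int_{F_i}f\,d\sigma$. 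It is also worth noting that the paper explicitly remarks that for its application only the lower bound $\mu^+(\partial Q)\ge\sum_i\liminf_\epsilon\mu(S_i^\epsilon)/\epsilon$ is needed, via the inclusion $\bigcup_i S_i^\epsilon\subset Q+\epsilon B_2^n$; your harder second direction ($S_i^\epsilon\setminus A_\epsilon$) establishes the full equality claimed in the lemma.

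Two small caveats to record. First, the $O(\epsilon^2)$ bound on a double-slab corner via two-dimensional Fubini requires that the corner region intersected with the relevant piece of space be bounded (as it is here because $Q$ is a bounded polytope); for an unbounded set, a two-dimensional marginal of a bounded density need not itself be bounded, so ``boundedness of $f$'' alone would not suffice. Second, right-continuity of $s\mapsto\int_{\Sigma_i(s)}f\,d\sigma$ at $s=\rho_i$ is not automatic for a merely bounded measurable $f$; dominated convergence gives it when $f$ is (say) continuous, or one invokes Lebesgue differentiation to get the limit for a.e. $\rho_i$. This regularity is implicit in the lemma's statement (which asserts a $\lim$, not a $\liminf$) and affects the paper's cited fact in exactly the same way, so it is not a defect of your argument relative to theirs.
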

\begin{proof} Let us use the notation
$$F_i^{\epsilon}=\{x\in\R^n:\,\, \langle x,\theta_i\rangle\in[\rho_i,\rho_i+\epsilon |\theta_i|],\,\forall j\neq i:\,\langle x,\theta_j\rangle\leq\rho_j\}.$$
It is well-known (see, e.g., Schneider \cite{book4}), that for Lebesgue measure $\lambda$ and a polytope $Q$ as in the lemma,
$$\frac{1}{\epsilon}\lambda\left((Q+\epsilon B_2^n)\setminus \left(\cup_{i=1}^N F_i^{\epsilon}\right) \right)\rightarrow_{\epsilon\rightarrow 0} 0.$$
Consequently, the same holds for any measure $\mu$ with a bounded absolutely continuous density. The Lemma hence follows from the definition of $\mu^+(\partial Q).$
\end{proof}

\begin{remark} Note that for our argument, it is in fact sufficient to have a lower estimate for $\mu^+(\partial Q)$, and thus enough to only use the straightforward observation
$$ \cup_{i=1}^N F_i^{\epsilon}\subset Q+\epsilon B_2^n,$$
in place of Lemma \ref{technical}.
\end{remark}

\subsection{Proof of Theorem \ref{lb}.} For an integer $N$ (to be chosen later), consider i.i.d. standard Gaussian random vectors $Y_1,...,Y_N.$ Fix $\rho>0,$ to be chosen later. Consider a random polytope 
$$Q:=\cap_{k=1}^N \{y:\,\langle Y_i, y\rangle\leq \rho\}.$$ 
Without loss of generality we may assume that the density of $\mu$ is bounded (see appendix for the details). By Lemma \ref{technical}, 
\begin{equation}\label{step1}
\mathbb{E}_Y \mu^+(\partial Q)=\sum_{i=1}^N\mathbb{E}_Y \lim_{\epsilon\rightarrow 0} \frac{1}{\epsilon} \mathbb{E}_X1_{\{\langle X,Y_i\rangle\in[\rho,\rho+\epsilon |Y_i|],\,\forall j\neq i:\,\langle X,Y_j\rangle\leq\rho\}},
\end{equation}
where by $\E_Y$ we denote the expectation with respect to the joint distribution of $Y_1,...,Y_N$, and by $\E_X$ -- the expectation with respect to the random vector $X,$ distributed according to the measure $\mu.$ Under additional assumptions which do not affect the generality of the Theorem, we may interchange the limit and the expectation $\E_Y$, the expectations in $X$ and $Y$, and the limit and the expectation $\E_X$: the justification is outlined in Lemma \ref{justif} in the appendix. The right hand side of (\ref{step1}) equals to
$$
N\mathbb{E}_X\lim_{\epsilon\rightarrow 0} \frac{1}{\epsilon} \mathbb{E}_Y 1_{\{\langle X,Y_i\rangle\in[\rho,\rho+\epsilon |Y_i|],\,\forall j\neq i:\,\langle X,Y_j\rangle\leq\rho\}}=
$$ 
\begin{equation}\label{step2} 
N\mathbb{E}_X\lim_{\epsilon\rightarrow 0} \frac{P_Y(\langle X,Y_1\rangle\in[\rho,\rho+\epsilon |Y_1|])}{\epsilon}\cdot\prod_{j=2}^N P_Y(\langle X,Y_j\rangle\leq\rho),
\end{equation}
where in the last passage we used independence of $Y_1,...,Y_N$. Recall that $Y_i$ are i.i.d. standard Gaussian, and therefore, for any fixed vector $X$, 
\begin{equation}\label{gaussdist}
\langle X/|X|,Y_j\rangle\sim N(0,1).
\end{equation}
Hence, by Lemma \ref{gauss_property}
\begin{equation}\label{1}
\lim_{\epsilon\rightarrow 0} \frac{P_Y(\langle X,Y_1\rangle\in[\rho,\rho+\epsilon |Y_1|])}{\epsilon}=\frac{1+o(1)}{\sqrt{2\pi}}\frac{\sqrt{n}}{|X|}e^{-\frac{\rho^2}{2|X|^2}}.
\end{equation}
Next, in view of (\ref{gaussdist}) and the well-known inequality 
$$\int_a^{\infty} e^{-\frac{s^2}{2}}ds\leq \frac{1}{a} e^{-\frac{a^2}{2}},$$
we get
\begin{equation}\label{2}
P_Y(\langle X,Y_j\rangle\leq\rho)\geq 1-\frac{|X|}{\sqrt{2\pi}\rho}e^{-\frac{\rho^2}{2|X|^2}}.
\end{equation}

Combining (\ref{step2}), (\ref{1}) and (\ref{2}) we get

\begin{equation}\label{step3}
\mathbb{E}_Y \mu^+(\partial Q)\geq \frac{1+o(1)}{\sqrt{2\pi}}\E_X \bigl[\frac{\sqrt{n}}{|X|}e^{-\frac{\rho^2}{2|X|^2}}\cdot N\cdot\left(1-\frac{|X|}{\sqrt{2\pi}\rho}e^{-\frac{\rho^2}{2|X|^2}}\right)^{N-1}\bigr].
\end{equation}

For brevity, let $W=\sqrt{Var(|X|)}=\sqrt{\E(X-\E|X|)^2}$. By Markov's inequality we have, for any $\beta>0,$
$$P\left(|X|\in [\E|X|-\beta W,\E|X|+\beta W]\right)\geq 1-\frac{1}{\beta^2}.$$
Conditioning on the latter event, we see that (\ref{step3}) implies, for any $\beta<\frac{1}{\alpha},$
$$\mathbb{E}_Y \mu^+(\partial Q)\geq$$
\begin{equation}\label{step4}
 \frac{1+o(1)}{\sqrt{2\pi}}\left(1-\frac{1}{\beta^2}\right)\frac{\sqrt{n}}{\E|X|+\beta W}e^{-\frac{\rho^2}{2(\E|X|-\beta W)^2}}\cdot N\cdot\left(1-\frac{\E|X|+\beta W}{\sqrt{2\pi}\rho}e^{-\frac{\rho^2}{2(\E|X|+\beta W)^2}}\right)^{N-1}.
\end{equation}
Next, we let 
$$N=\bigl[\frac{\sqrt{2\pi}\rho}{\E|X|+\beta W}e^{\frac{\rho^2}{2(\E|X|+\beta W)^2}}+1\bigr]+1,$$
in which case
\begin{equation}\label{expon}
N\cdot\left(1-\frac{\E|X|+\beta W}{\sqrt{2\pi}\rho}e^{-\frac{\rho^2}{2(\E|X|+\beta W)^2}}\right)^{N-1}\geq \frac{1}{e}.
\end{equation}
We note that (\ref{step4}) and (\ref{expon}) yield, with this choice of $N:$
\begin{equation}\label{step5}
\mathbb{E}_Y \mu^+(\partial Q)\geq \frac{1+o(1)}{e}\left(1-\frac{1}{\beta^2}\right)e^{-\frac{\rho^2}{2(\E|X|-\beta W)^2}}\cdot \frac{\sqrt{n}\rho}{(\E|X|+\beta W)^2}e^{\frac{\rho^2}{2(\E|X|+\beta W)^2}}.
\end{equation}
Let $w=\frac{W}{\E|X|}$. By our assumption,
\begin{equation}\label{small-c}
\sqrt{Var(|X|)}\leq \alpha\E|X|,
\end{equation}
for some $\alpha<1,$ and therefore 
\begin{equation}\label{w}
w\in [0,\alpha]. 
\end{equation}
Let
\begin{equation}\label{rho}
\rho:=\frac{1}{2}\sqrt{\frac{1-(\beta w)^2}{\beta}}\cdot \frac{\E|X|}{\sqrt{w}}.
\end{equation}
In that case, observe, using (\ref{w}):
\begin{equation}\label{comp} 
-\frac{\rho^2}{2(\E|X|-\beta W)^2}+\frac{\rho^2}{2(\E|X|+\beta W)^2}=\frac{\rho^2}{2\E|X|^2}\frac{-4\beta w}{(1-\beta w)^2(1+\beta w)^2}\geq -0.5.
\end{equation}

Plugging the value of $\rho$ from (\ref{rho}) we have, by (\ref{step5}) and (\ref{comp}):
\begin{equation}\label{step6}
\mathbb{E}_Y \mu^+(\partial Q)\geq \frac{1+o(1)}{e\sqrt{e}}\left(1-\frac{1}{\beta^2}\right)\frac{\sqrt{n}\rho}{(\E|X|+W)^2}=
\end{equation}
$$\frac{1+o(1)}{2e\sqrt{e}}\left(1-\frac{1}{\beta^2}\right)\sqrt{\frac{1-(\beta\alpha)^2}{\beta}}\frac{1}{(1+\alpha)^2}\frac{\sqrt{n}}{\sqrt{\E|X|}\sqrt[4]{Var(|X|)}}.$$
It remains to optimize in $\beta$ to finish the proof $\square.$

\begin{remark} A ``non-probabilistic'' version of the proof (where the independence of $X_i$ is not used in a strong way) could be carried out as well, with the help of ideas and results from Sodin \cite{sodin}, however such argument is both longer and imposes additional assumptions.
\end{remark}

Furthermore, a more general result holds, with the same proof:

\begin{theorem}\label{general}
Let $X$ be a random vector such that 
$$P(|X+y|\in [a,b])\geq 1-\delta,$$
for some $0<a<b,$ some $\delta\in (0,1)$ and some vector $y\in \R^n.$ Then 
$$\Gamma(X)\geq \frac{C(1-\delta)\sqrt{n}}{b\sqrt{(b/a)^2-1}}.$$
\end{theorem}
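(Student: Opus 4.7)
The plan is to follow the proof of Theorem \ref{lb} essentially verbatim, substituting the hypothesis $P(|X+y|\in[a,b])\geq 1-\delta$ for the Chebyshev step. The key preliminary observation is that $\Gamma$ is shift-invariant: directly from (\ref{def}), replacing $X$ by $X+y$ amounts to translating the supremizing set $Q$ by $-y$, which preserves convexity. Hence we may pass to $X+y$ at the outset and assume that $P(|X|\in[a,b])\geq 1-\delta$.

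With this reduction in hand, I would reuse the same random polytope $Q=\bigcap_{i=1}^{N}\{z:\,\langle Y_i,z\rangle\leq\rho\}$ with i.i.d.\ standard Gaussian $Y_i$, apply Lemma \ref{technical}, and carry out the computation leading to (\ref{step3}) without any modification. In place of the Chebyshev step (\ref{step4}), I would restrict the $X$-expectation to the event $\{|X|\in[a,b]\}$, losing a factor of $(1-\delta)$ and using the bounds $a\leq|X|\leq b$ to replace the $X$-dependent exponentials by their worst case. This yields
$$\mathbb{E}_Y\,\mu^+(\partial Q)\geq \frac{1+o(1)}{\sqrt{2\pi}}(1-\delta)\frac{\sqrt{n}}{b}e^{-\rho^2/(2a^2)}\cdot N\cdot \left(1-\frac{b}{\sqrt{2\pi}\rho}e^{-\rho^2/(2b^2)}\right)^{N-1}.$$
Choosing $N=\left\lfloor\frac{\sqrt{2\pi}\rho}{b}e^{\rho^2/(2b^2)}\right\rfloor+2$, as in (\ref{expon}), makes the third factor at least $1/e$, and the estimate collapses to a constant multiple of $(1-\delta)\sqrt{n}\,\rho\,b^{-2}\exp\!\left(-\rho^2(b^2-a^2)/(2a^2b^2)\right)$.

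The final step is the optimization in $\rho$. The function $\rho\mapsto \rho\exp(-\tau\rho^{2})$ with $\tau=(b^2-a^2)/(2a^2b^2)$ is maximized at $\rho=1/\sqrt{2\tau}=ab/\sqrt{b^2-a^2}$, where it attains the value $ab/\sqrt{e(b^2-a^2)}$. Plugging this in and simplifying via the identity $a/\bigl(b\sqrt{b^2-a^2}\bigr)=1/\bigl(b\sqrt{(b/a)^2-1}\bigr)$ produces exactly the claimed bound $\Gamma(X)\geq C(1-\delta)\sqrt{n}/\bigl(b\sqrt{(b/a)^2-1}\bigr)$.

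The argument introduces no genuinely new technique beyond the shift-invariance reduction; I expect the main care to lie in bookkeeping, namely in verifying that the choice of $\rho$ is consistent with the regime in which the $N\cdot(1-x)^{N-1}\geq 1/e$ step is valid (which it is as long as $a<b$, so that $\rho$ is finite and the relevant $x\in(0,1)$). The role previously played by the parameter $\beta$ in Theorem \ref{lb} is absorbed into the single optimization over $\rho$, which is why the final expression depends only on the ratio $b/a$ and on $\delta$.
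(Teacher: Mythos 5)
Your approach matches the paper's own: the paper states Theorem \ref{general} holds ``with the same proof'' as Theorem \ref{lb}, and you carry this out faithfully. The shift-invariance reduction is correct and necessary (translating $X$ by $y$ translates the supremizing convex set by $-y$), the Chebyshev/conditioning step is correctly replaced by the given two-sided localization of $|X|$, and the worst-case substitutions $|X|=a$ in $e^{-\rho^2/(2|X|^2)}$ and $|X|=b$ in $|X|e^{-\rho^2/(2|X|^2)}$ are the right ones since both are increasing on $[a,b]$. The final optimization in $\rho$ recovers exactly the stated bound, so the computational skeleton is right.

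The one claim that does not hold as stated is the parenthetical assertion that $x:=\frac{b}{\sqrt{2\pi}\rho}e^{-\rho^2/(2b^2)}\in(0,1)$ whenever $a<b$, at your optimal $\rho=ab/\sqrt{b^2-a^2}$. At this $\rho$ one finds $x<1$ iff $ue^{u}>\frac{1}{2\pi}$ where $u=a^2/(b^2-a^2)$, and this fails once $b/a$ is large enough (roughly $b/a\gtrsim 2.9$); there the factor $1-x$ is negative and the polytope step collapses. This is easily repaired, and you should say how: either replace $\rho$ by $\max\bigl\{b,\,ab/\sqrt{b^2-a^2}\bigr\}$, which forces $x\leq (2\pi e)^{-1/2}<1$, or observe that for $b/a$ bounded away from $1$ the conclusion already follows from a single halfspace ($N=1$, $\rho=0$), whose expected $\mu$-perimeter is at least $(1+o(1))(1-\delta)\sqrt{n}/(\sqrt{2\pi}\,b)$ by Lemma \ref{gauss_property}, while $\sqrt{(b/a)^2-1}\geq 1$ in that regime. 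With either patch the argument is complete and is the one intended by the paper; note also that (like the paper's equation (\ref{expon})) the claim ``the third factor is at least $1/e$'' really needs $N(1-x)^{N-1}\gtrsim 1/x$, which holds for the stated $N$ only up to a harmless absolute constant rather than literally.
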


Consequently, one may always have the following rough but general estimate:

\begin{corollary}
For any random vector $X$ with density $f$, 
$$\Gamma(X)\geq \sup_{y\in\R^n}\frac{Cn}{(\E|X+y|)^2||f||_{\infty}^{\frac{1}{n}}}.$$
\end{corollary}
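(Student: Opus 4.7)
My plan is to apply Theorem \ref{general} with $a$, $b$, $\delta$ chosen as functions of $M:=\|f\|_{\infty}$ and $R:=\E|X+y|$ for an arbitrary $y\in\R^n$, and then take the supremum over $y$. For the upper tail I would set $b:=4R$, so that Markov's inequality gives $P(|X+y|>b)\leq 1/4$. For the lower tail I would exploit the pointwise density bound: since $X+y$ still has density bounded by $M$, for every $a>0$,
$$P(|X+y|<a)\leq M\cdot |aB_2^n|=M\,|B_2^n|\,a^n.$$
Choosing $a^n=1/(4M|B_2^n|)$ therefore gives $P(|X+y|<a)\leq 1/4$, and using the standard asymptotic $|B_2^n|^{1/n}=\Theta(1/\sqrt{n})$ this $a$ satisfies $a\geq c\sqrt{n}/M^{1/n}$ for an absolute constant $c>0$.

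Combining both tail estimates, $P(|X+y|\in[a,b])\geq 1/2$, so Theorem \ref{general} applies with $1-\delta=1/2$. Before invoking the theorem I need to check $a<b$; a second use of the density bound shows $P(|X+y|>t_0)\geq 1/2$ for $t_0=(2M|B_2^n|)^{-1/n}$, whence $R\geq t_0/2$ and consequently $b/a\geq 2\cdot 2^{1/n}\geq 2$. Using the crude estimate $\sqrt{(b/a)^2-1}\leq b/a$ in the denominator of Theorem \ref{general}, I obtain
$$\Gamma(X)\geq \frac{C(1-\delta)\sqrt{n}}{b\sqrt{(b/a)^2-1}}\geq \frac{C'\sqrt{n}\,a}{b^2}\geq \frac{C''\,n}{R^2\,M^{1/n}}.$$
Taking the supremum over $y\in\R^n$ yields the corollary.

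The argument is essentially a bookkeeping exercise once the choices of $a$ and $b$ are in place. The only mildly delicate point I anticipate is making sure the ratio $b/a$ stays bounded below by an absolute constant strictly greater than $1$, so that the square root in Theorem \ref{general} is non-degenerate; but this falls out automatically from the same pointwise density bound that was used to construct $a$.
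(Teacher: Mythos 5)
Your proposal is correct and follows essentially the same route as the paper: bound the lower tail of $|X+y|$ via the pointwise density estimate, bound the upper tail via Markov, and feed $a\asymp\sqrt{n}/\|f\|_\infty^{1/n}$, $b=4\E|X+y|$, $\delta=1/2$ into Theorem \ref{general} together with $\sqrt{(b/a)^2-1}\le b/a$. The one point where you are slightly more careful than the paper is explicitly verifying $a<b$ (via the observation $R\ge t_0/2$), which the paper leaves implicit; this is a welcome addition but not a different argument.
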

\begin{proof} Note that in the case when the density of $X$ is unbounded, or in the case when the first moment of $X$ does not exist, the conclusion of the corollary is straightforward. Assume that $||f||_{\infty}<\infty$ and $\E|X|<\infty.$ 

Fix an arbitrary $y\in\R^n$. Observe that for any $a>0,$
\begin{equation}\label{smb}
P(|X+y|<a)=P(X\in -y+a B_2^n)\leq ||f||_{\infty}\cdot |-y+a B_2^n|=\left(c\frac{a||f||^{\frac{1}{n}}_{\infty}}{\sqrt{n}}\right)^n.
\end{equation}
Note also, by Markov's inequality:
\begin{equation}\label{ld}
P(|X+y|>4\E|X+y|)\leq 0.25.
\end{equation}
Therefore, we have
$$P(|X+y|\in [a,b])\geq 1-\delta,$$
with $\delta=0.5,$ $a=\frac{c\sqrt{n}}{||f||_{\infty}^{\frac{1}{n}}}$ and $b=4\E|X+y|.$ An application of Theorem \ref{general} together with the inequality
$$\sqrt{(b/a)^2-1}\leq \frac{b}{a}$$
yields the conclusion.
\end{proof}

\section{Upper bounds on maximal perimeter}

We begin with formulating a simple lemma that will be used a number of times; this trick was used before, for example, by Ball \cite{ball}.

\begin{lemma}\label{simple}
Let $K$ be a convex body. Suppose, for some $x_0\in\R^n$ and some $R>0,$ we have $R B_2^n+x_0\subset K.$ Then $|\partial K|_{n-1}\leq \frac{n|K|}{R}$.
\end{lemma}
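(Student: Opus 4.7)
The plan is to exploit the containment $RB_2^n + x_0 \subset K$ to absorb a small ball $\epsilon B_2^n$ into a dilation of $K$, and then read off the surface area bound from the definition.

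First I would translate so that $x_0 = 0$, which does not affect either side of the inequality; we then have $RB_2^n \subset K$, equivalently $B_2^n \subset \tfrac{1}{R}K$. Multiplying by $\epsilon$ gives $\epsilon B_2^n \subset \tfrac{\epsilon}{R}K$, and since $K$ is convex, Minkowski addition then gives
\begin{equation*}
K + \epsilon B_2^n \subset K + \tfrac{\epsilon}{R} K = \left(1 + \tfrac{\epsilon}{R}\right) K.
\end{equation*}
Taking Lebesgue volumes,
\begin{equation*}
|K + \epsilon B_2^n| \leq \left(1 + \tfrac{\epsilon}{R}\right)^n |K|.
\end{equation*}

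Next I would invoke the standard identification of the outer Minkowski content with the $(n-1)$-dimensional Hausdorff measure of $\partial K$ for convex bodies, so that
\begin{equation*}
|\partial K|_{n-1} = \lim_{\epsilon \to 0^+} \frac{|K + \epsilon B_2^n| - |K|}{\epsilon}.
\end{equation*}
Combining this with the previous display,
\begin{equation*}
|\partial K|_{n-1} \leq \lim_{\epsilon \to 0^+} \frac{\left(1 + \tfrac{\epsilon}{R}\right)^n - 1}{\epsilon}\, |K| = \frac{n\,|K|}{R},
\end{equation*}
which is the asserted estimate.

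There is essentially no obstacle here; the entire content of the lemma is the one-line Minkowski-containment trick that upgrades ``$K$ contains a ball of radius $R$'' to ``a thin outer neighborhood of $K$ fits inside a $(1+\epsilon/R)$-dilate of $K$.'' The only thing requiring any care is the limit step, which is why I would prefer this Brunn--Minkowski style proof over the alternative cone decomposition $|K| = \tfrac{1}{n}\int_{\partial K} \langle x, \nu(x)\rangle\, d\mathcal{H}^{n-1}(x)$ combined with $\langle x, \nu(x)\rangle = h_K(\nu(x)) \geq R$; both approaches work and the choice is purely cosmetic.
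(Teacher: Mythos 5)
Your proof is correct and is essentially the same as the paper's: both absorb $\epsilon B_2^n$ into $\frac{\epsilon}{R}K$ (up to translation), use convexity to write $K + \frac{\epsilon}{R}K = (1+\frac{\epsilon}{R})K$, take volumes, and pass to the limit. The only cosmetic difference is that you normalize $x_0 = 0$ up front, whereas the paper carries a translation vector $y_0$ through the computation.
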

\begin{proof} Our assumption implies that $\epsilon B_2^n\subset \frac{\epsilon}{R}K+y_0$, for some $y_0\in\R^n.$ Therefore,
$$|\partial K|_{n-1}=\lim_{\epsilon\rightarrow 0}\frac{|K+\epsilon B_2^n|-|K|}{\epsilon}
\leq\lim_{\epsilon\rightarrow 0}\frac{|K+\frac{\epsilon}{R}K+y_0|-|K|}{\epsilon}=$$
$$|K|\cdot\lim_{\epsilon\rightarrow 0}\frac{(1+\epsilon/R)^n-1}{\epsilon}=\frac{n|K|}{R}.$$
\end{proof}

\begin{remark} In the case when $\mu$ is uniform on an isotropic set $K$, in view of Lemma \ref{simple} and the fact that an isotropic convex body contains a ball of radius $0.1$ (see below Lemma \ref{klartag}), we note
\begin{equation}\label{oneline}
\Gamma(\mu)=\frac{|\partial K|_{n-1}}{|K|}\leq 10n.\end{equation}
\end{remark}

One can provide an integral formula for $\mu^+(\partial Q)$, for any absolutely continuous measure $\mu$ (see, e.g, \cite{GL}):
\begin{equation}\label{def2}
\mu^+(\partial Q)=\int_{\partial Q} f(y)d\sigma(y).
\end{equation}


Fix a unimodule probability measure $\mu$ with density $f(x)$. Denote the convex level set, that depends on $f$ and a parameter $t>0$, as follows:
$$K_{t}(f):=\{x\in \R^n:\, f(x)\geq t\}.$$
Define also 
$$R_t(f):=\sup_{y\in\R^n}\{r>0:\, rB_2^n+y\subset K_t(f)\}$$ 
to be the radius of the largest ball contained inside $K_t(f)$. We prove the following

\begin{lemma}\label{ub-general}
Let $\mu$ be a unimodule measure with absolutely continuous density $f$. Then for any convex set $Q,$
$$\mu^+(\partial Q)\leq n\cdot\inf_{t\in (0,||f||_{\infty})}\frac{||f||_{\infty}|K_t(f)|+||f||_1}{R_t(f)}.$$
\end{lemma}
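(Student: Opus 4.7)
The plan is to start from the integral representation~(\ref{def2}), $\mu^+(\partial Q)=\int_{\partial Q} f\,d\sigma$, and to split the boundary integral according to whether the density exceeds a threshold $t$:
\[
\mu^+(\partial Q)=\int_{\partial Q\cap K_t(f)} f\,d\sigma+\int_{\partial Q\setminus K_t(f)} f\,d\sigma.
\]
I aim to show that the first piece is at most $n\|f\|_\infty |K_t(f)|/R_t(f)$ and the second at most $n\|f\|_1/R_t(f)$. Summing and taking the infimum over $t\in(0,\|f\|_\infty)$ will then yield the claimed inequality.

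For the first piece, the unimodule hypothesis ensures that $K_t(f)$, and hence $Q\cap K_t(f)$, is convex. Since any boundary point of $Q$ lying in $K_t(f)$ is automatically a boundary point of $Q\cap K_t(f)$, we have the inclusion $\partial Q\cap K_t(f)\subset \partial(Q\cap K_t(f))$. Using the standard monotonicity of surface area for nested convex bodies (the nearest-point projection onto the smaller one is $1$-Lipschitz and surjects onto its boundary), we obtain $|\partial (Q\cap K_t(f))|_{n-1}\leq|\partial K_t(f)|_{n-1}$. By definition $K_t(f)$ contains a ball of radius $R_t(f)$, so Lemma~\ref{simple} yields $|\partial K_t(f)|_{n-1}\leq n|K_t(f)|/R_t(f)$. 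Bounding $f\leq\|f\|_\infty$ on the integration domain completes the first estimate.

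For the second piece, on $\partial Q\setminus K_t(f)$ one has $f(x)<t$, and $\{x\in\partial Q\setminus K_t(f):\,f(x)>s\}=\partial Q\cap K_s(f)\setminus K_t(f)$ for $s<t$ (and is empty for $s\geq t$), so Fubini gives
\[
\int_{\partial Q\setminus K_t(f)} f\,d\sigma=\int_0^t |\partial Q\cap K_s(f)\setminus K_t(f)|_{n-1}\,ds.
\]
For each such $s$, the same monotonicity argument as before together with Lemma~\ref{simple} yields $|\partial Q\cap K_s(f)|_{n-1}\leq|\partial K_s(f)|_{n-1}\leq n|K_s(f)|/R_s(f)$, and the inclusion $K_t(f)\subset K_s(f)$ forces $R_s(f)\geq R_t(f)$. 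Substituting these bounds and using the layer-cake identity $\int_0^\infty |K_s(f)|\,ds=\|f\|_1$ furnishes the desired estimate $n\|f\|_1/R_t(f)$.

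The main conceptual step, used in both pieces, is the surface-area monotonicity $|\partial A|_{n-1}\leq|\partial B|_{n-1}$ for convex bodies $A\subset B$; this is the reason the unimodule hypothesis is essential, since it guarantees convexity of all the level sets $K_s(f)$ and hence of the intersections $Q\cap K_s(f)$ needed in the argument. The remaining ingredients --- Fubini, Lemma~\ref{simple}, and the monotonicity $R_s(f)\geq R_t(f)$ in $s\leq t$ --- are entirely routine.
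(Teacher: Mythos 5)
Your proposal is correct and follows essentially the same route as the paper's proof: the same decomposition of $\partial Q$ by the level set $K_t(f)$, the bound $f\le\|f\|_\infty$ on the inner piece, the layer-cake plus Fubini representation on the outer piece, surface-area monotonicity for nested convex bodies, Lemma~\ref{simple}, and the monotonicity $R_s(f)\ge R_t(f)$ for $s\le t$. The only difference is cosmetic: you pass through the intermediate inclusion $\partial Q\cap K_t(f)\subset\partial(Q\cap K_t(f))$ before invoking monotonicity, whereas the paper applies the monotonicity directly to $|\partial Q\cap K_t|_{n-1}\le|\partial K_t|_{n-1}$; your version is a slight clarification of the same step.
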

\begin{proof} We will use notation $K_t(f)=K_t$ and $R_t(f)=R_t$. Fix an arbitrary $t\in (0,||f||_{\infty})$. For a convex body $Q$, observe that
\begin{equation}\label{ub1}
\mu^+(\partial Q\cap K_t)\leq |\partial Q\cap K_t|_{n-1}\cdot ||f||_{\infty}.
\end{equation}
Recall that the usual (Lebesgue) surface area of a convex body is smaller than that of any convex body containing it. Therefore,
\begin{equation}\label{ub2}
|\partial Q\cap K_t|_{n-1}\leq |\partial K_t|_{n-1}.
\end{equation}
Next, by Lemma \ref{simple},
\begin{equation}\label{ub3}
|\partial K_t|_{n-1}\leq \frac{n|K_t|}{R_t}.
\end{equation}

Combining (\ref{ub1}), (\ref{ub2}) and (\ref{ub3}), we get
\begin{equation}\label{est1}
\mu^+(\partial Q\cap K_t)\leq \frac{n||f||_{\infty}|K_t|}{R_t}.
\end{equation}

Next, consider $\mu^{+}(\partial Q\setminus K_t)$. Note that for any $a>0$ one has 
$$a=\int_0^{\infty} 1_{\{a\geq t\}}(t)dt.$$
Applying this observation with with $a=f(y)$, we write, in view of (\ref{def2}): 
$$\mu^{+}(\partial Q\setminus K_t)=\int_{\partial Q\setminus K_t} f(y)d\sigma(y)=$$
\begin{equation}\label{ub5}
\int_{\partial Q\setminus K_t} \int_0^{\infty} 1_{K_s} ds d\sigma(y)=\int_{0}^t |\partial Q\cap K_s|_{n-1} ds,
\end{equation}
where in the second passage we used Fubbini's theorem. As before, by convexity, we notice
\begin{equation}\label{ub6}
|\partial Q\cap K_s|_{n-1}\leq |\partial K_s|_{n-1}.
\end{equation}
For any $s\in [0,t]$, we have $K_t\subset K_s,$ and hence $R_s\geq R_t.$ Therefore, by Lemma \ref{simple},
\begin{equation}\label{ub7}
|\partial K_s|_{n-1}\leq  \frac{n|K_s|}{R_s}\leq \frac{n|K_s|}{R_t}.
\end{equation}
It remains to recall that
\begin{equation}\label{ub8}
\int_0^t |K_s|ds\leq \int_0^{\infty} |K_s|ds=||f||_1.
\end{equation}
Combining (\ref{ub5}), (\ref{ub6}), (\ref{ub7}) and (\ref{ub8}), we get
\begin{equation}\label{est2}
\mu^{+}(\partial Q\setminus K_t)\leq \frac{n||f||_1}{R_t}.
\end{equation}

Finally, (\ref{est1}) and (\ref{est2}) yield
$$\mu^+(\partial Q)= \mu^{+}(\partial Q\setminus K_t)+\mu^+(\partial Q\cap K_t)\leq n\frac{||f||_{\infty}|K_t|+||f||_1}{R_t}.$$
It remains to note that $t$ was arbitrary, and the proof is complete.
\end{proof} 

\begin{remark}
In view of the definition of $K_t$, observe that for any probability measure $\mu,$
\begin{equation}\label{ub4}
|K_t|\leq \frac{\mu(K_t)}{t}\leq \frac{1}{t},
\end{equation}
where we used the simple lower bound for $\mu(K_t)$. Therefore, for a unimodule probability measure, Lemma \ref{ub-general} implies, for any convex $Q:$
$$\mu^+(\partial Q)\leq \frac{2n||f||_{\infty}}{\sup_{t\in (0,||f||_{\infty})}tR_t}.$$
\end{remark}

\medskip

Lastly, we need the following Lemma.

\begin{lemma}\label{existence}
For a bounded absolutely continuous unimodule probability density $f$, and for any constant $\alpha\in (0,1)$, there exists a $t>0$ such that
$$|K_t|\in\left[\frac{1-\alpha}{||f||_{\infty}},\frac{1+\alpha}{||f||_{\infty}}\right].$$
\end{lemma}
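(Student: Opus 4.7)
My plan is to study the non-increasing function $\phi(t) := |K_t(f)|$ on $(0, M)$, where $M := \|f\|_\infty$, and locate a transition point $t^\ast$ at which $\phi$ crosses the target centre $1/M$. The preparatory step records three properties: $\phi$ is non-increasing and left-continuous, with well-defined right-limits $\phi(t+) = |\{f > t\}|$; by the layer-cake identity, $\int_0^M \phi(t)\,dt = \int f\,dx = 1$; and since $f \geq t$ on $K_t$ while $\int f = 1$, one has $t\phi(t) \leq 1$, i.e. $\phi(t) \leq 1/t$. Two immediate corollaries are $\phi(M^-) \leq 1/M$ (take $t \uparrow M$ in the third property) and $\phi(0^+) \geq 1/M$ (from $1 = \int f \leq M\phi(0^+)$). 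Thus $\phi$ begins at or above $1/M$ and ends at or below it.

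The main step is to set $t^\ast := \sup\{t \in (0, M) : \phi(t) \geq 1/M\}$ and examine $\phi(t^\ast)$ together with $\phi(t^\ast+)$. Left-continuity of $\phi$ yields $\phi(t^\ast) \geq 1/M$; the definition of supremum yields $\phi(t^\ast+) \leq 1/M$. Three cases arise. If $\phi(t^\ast) \leq (1+\alpha)/M$, then $t = t^\ast$ already works, since $\phi(t^\ast) \in [1/M,\,(1+\alpha)/M] \subset [(1-\alpha)/M,\,(1+\alpha)/M]$. If $\phi(t^\ast) > (1+\alpha)/M$ but $\phi(t^\ast+) \geq (1-\alpha)/M$, then picking $t$ slightly to the right of $t^\ast$ produces $\phi(t) \in [(1-\alpha)/M,\,1/M]$, again inside the target interval.

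The main obstacle is the last case: $\phi(t^\ast) > (1+\alpha)/M$ together with $\phi(t^\ast+) < (1-\alpha)/M$, which would force $|\{f = t^\ast\}| > 2\alpha/M$, i.e.\ a plateau of $f$ of controlled size at level $t^\ast$. To rule this case out I intend to combine two ingredients: the bound $t^\ast\phi(t^\ast)\leq 1$, forcing $t^\ast < M/(1+\alpha)$, and the absolute-continuity hypothesis on $f$, which restricts the levels at which $f$ can have plateaux of Lebesgue-positive measure. Concretely, splitting $\int_0^M \phi = 1$ at $t^\ast$ and using $\phi(t) < (1-\alpha)/M$ on $(t^\ast, M)$ to upper-bound $\int_{t^\ast}^M \phi$ against $\phi(t) \geq \phi(t^\ast) > (1+\alpha)/M$ on $(0, t^\ast)$ should contradict the hypothesised jump. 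As a cleaner fallback, if this direct computation proves delicate, I would mollify by $f_\varepsilon := f \ast \rho_\varepsilon$: smoothness of $f_\varepsilon$ combined with Sard's theorem ensures $|\{f_\varepsilon = t\}| = 0$ for a.e.\ $t$, so $t \mapsto |K_t(f_\varepsilon)|$ is continuous and the intermediate value theorem supplies an exact $t_\varepsilon$ with $|K_{t_\varepsilon}(f_\varepsilon)| = 1/\|f_\varepsilon\|_\infty$; a subsequential limit as $\varepsilon \to 0$, using that $\|f_\varepsilon\|_\infty \to \|f\|_\infty$ for absolutely continuous $f$, then recovers the desired $t$ for $f$ itself.
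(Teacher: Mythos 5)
Your framework is a genuine variant of the paper's: the paper picks an explicit $\tau$ with $\mu(K_\tau)\ge 1-\alpha$ and $s=\|f\|_\infty/(1+\alpha)$, disposes of the two easy cases, and then appeals to the intermediate value theorem on $[\tau,s]$; you instead define $t^\ast=\sup\{t:\phi(t)\ge 1/M\}$ and do a three-way case split. Both arguments eventually need $t\mapsto\phi(t)=|K_t|$ to have no jump over the target window. You are right to flag that jump as the crux --- the paper simply asserts ``by continuity and monotonicity'' without justifying continuity --- but your attempts to rule the jump out do not succeed, and in fact cannot.

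The direct computation does not yield a contradiction. In the jump case you get two \emph{lower} bounds for $\int_0^{t^\ast}\phi$, namely $\int_0^{t^\ast}\phi\ge t^\ast\phi(t^\ast)>t^\ast(1+\alpha)/M$ and $\int_0^{t^\ast}\phi=1-\int_{t^\ast}^M\phi>1-(M-t^\ast)(1-\alpha)/M$, and these are perfectly compatible with $\int_0^M\phi=1$ as long as $t^\ast<M/(1+\alpha)$, which is exactly what $t^\ast\phi(t^\ast)\le1$ already gives. More decisively, the jump case actually occurs: on $\R$ take $f=1$ on $[-\tfrac14,\tfrac14]$, $f=\tfrac12$ on $[-\tfrac34,-\tfrac14)\cup(\tfrac14,\tfrac34]$, and $f=0$ elsewhere. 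This is a bounded unimodule probability density (all superlevel sets are intervals), $\|f\|_\infty=1$, yet $|K_t|\in\{\tfrac32,\tfrac12\}$ for every $t\in(0,1]$, so the window $[1-\alpha,1+\alpha]$ is skipped whenever $\alpha<\tfrac12$. A continuous unimodule counterexample is obtained by replacing the vertical drops with short linear ramps while keeping the wide plateau at level $\tfrac12$: $\phi$ still jumps from $\approx\tfrac32$ down to $\approx\tfrac12$ as $t$ crosses $\tfrac12$. This also kills the mollification fallback: $\|f_\varepsilon\|_\infty\to\|f\|_\infty$ and a level $t_\varepsilon$ with $|K_{t_\varepsilon}(f_\varepsilon)|=1/\|f_\varepsilon\|_\infty$ exists, but $t_\varepsilon\to\tfrac12$ and the limiting volume lands strictly between $\phi(\tfrac12+)$ and $\phi(\tfrac12)$, which is precisely the gap in the range of $\phi$; no level set of $f$ itself has that volume.

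So the obstruction is real: for general unimodule densities the lemma is false as stated, and the gap you identified is the same one the paper's own proof has (it silently assumes $|K_t|$ is continuous). The statement is correct once one restricts to log-concave $f=e^{-\varphi}$, because a convex $\varphi$ can only be constant on a set of positive measure at its global minimum, so $|\{f=t\}|=0$ for $t<\|f\|_\infty$ and $\phi$ is continuous on $(0,\|f\|_\infty)$; at that point either your $t^\ast$ argument or the paper's $[\tau,s]$ argument closes. If you want the lemma to survive for all unimodule densities, you would have to add continuity of $t\mapsto|K_t|$ (or absence of interior plateaus of $f$ below the maximum) as a hypothesis.
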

\begin{proof} Firstly, let $\tau$ such that
$$\mu(K_{\tau})\geq 1-\alpha;$$
note that such a choice always exists, since $\mu(K_{t})\rightarrow_{t\rightarrow 0} 1$. Observe that we always have $\mu(K_{\tau})\leq |K_{\tau}|\cdot ||f||_{\infty}$, and thus
$$|K_{\tau}|\geq \frac{1-\alpha}{||f||_{\infty}}.$$
In case $|K_{\tau}|\leq \frac{1+\alpha}{||f||_{\infty}}$, we let $t=\tau$ and the proof is finished. Alternatively, suppose $|K_{\tau}|\geq \frac{1+\alpha}{||f||_{\infty}}$.

Next, let $s=\frac{||f||_{\infty}}{1+\alpha}$. By continiuity, $K_s$ is a convex set with non-empty interior. Observe that
$$|K_{s}|\leq \frac{\mu(K_s)}{s} \leq\frac{1}{s}\leq\frac{1+\alpha}{||f||_{\infty}}.$$
In case $|K_{s}|\geq \frac{1-\alpha}{||f||_{\infty}}$, we let $t=s$ and the proof is finished. Alternatively, suppose $|K_{s}|\leq \frac{1-\alpha}{||f||_{\infty}}$.

Since $|K_{\tau}|\geq \frac{1+\alpha}{||f||_{\infty}}$, and $|K_{s}|\leq \frac{1-\alpha}{||f||_{\infty}}$, by continuity and monotonicity of the $|K_t|$ function, there exists a $t\in [\tau, s]$ such that $|K_t|=\frac{1}{||f||_{\infty}}$, and hence the Lemma follows in this case as well.
\end{proof}

\medskip
\medskip

\textbf{Proof of Theorem \ref{ub}.} Let $f$ be a unimodule density with level sets $K_t(f)$. Pick an $\alpha>0$ and let $t$ be such that
\begin{equation}\label{t}
|K_t(f)|\cdot ||f||_{\infty}\in [1-\alpha,1+\alpha].
\end{equation}

Let $T$ be such a linear volume preserving transformation that the ellipsoid of maximal volume inside the set $TK_t(f)$ is a ball of radius $R$. Consider $\mu_T$, the push forward of $\mu$ under $T$, with density by $f_T(x)=f(T^{-1}x)$. Note that 
$$K_t(f_T)=T K_t(f),$$
and also that $||f||_{\infty}=||f_T||_{\infty}$. Therefore, in view of (\ref{t}), and the fact that $T$ is volume preserving,
\begin{equation}\label{newt}
|K_t(f_T)|\cdot ||f_T||_{\infty}\in [1-\alpha,1+\alpha].
\end{equation}
By Lemma \ref{ub-general} and (\ref{newt}), we estimate, for every convex set $Q$:
\begin{equation}\label{estimate}
\mu_T^+(\partial Q)\leq \frac{(2+\alpha)n}{R_t(f_T)}.
\end{equation}
Note that $R_t(f_T)\geq R$, by our choice of the operator $T.$

Recall that a convex body is said to be in John's position if the ellipsoid of maximal volume contained in it is the unit ball. Ball \cite{ball_cube1} showed that if a convex body $L$ is in John's position, then $|L|\leq C_0^n$, for an absolute constant $C_0,$ with equality when $L$ is a simplex. Therefore, by our choice of $T$ and $t,$
\begin{equation}\label{finest}
R_t(f_T)\geq R\geq \frac{|K_t(f_T)|^{\frac{1}{n}}}{C_0}\geq \frac{(1-\alpha)^{\frac{1}{n}}||f_T||_{\infty}^{-\frac{1}{n}}}{C_0}.
\end{equation}
By letting $\alpha\rightarrow 0,$ we conclude, by (\ref{estimate}) and (\ref{finest}), that
$$\mu_T^+(\partial Q)\leq 2C_0n||f_T||_{\infty}^{\frac{1}{n}},$$
and hence the theorem is proved. $\square$

\begin{remark} Our argument shows that whenever $\mu$ is a measure with absolutely continuous unimodule density which is additionally even, we have
\begin{equation}\label{eq-cube}
\Gamma(\mu)\leq 4n||f||^{\frac{1}{n}}_{\infty}.
\end{equation}
This follows from the fact that for a symmetric convex set $Q$ in John's position, $|Q|\leq 2^n,$ with equality in the case $Q=B^n_{\infty}$, as was shown by Ball \cite{ball_cube1}. 

In the case when $\mu$ is uniform on $B^n_{\infty}$, one has $\Gamma(\mu)=2n||f||^{\frac{1}{n}}_{\infty}$. The precise optimal value of the constant in (\ref{eq-cube}) is not clear at the moment: for instance, does there exist an even unimodule measure $\mu$ with $\Gamma(\mu)>2n||f||^{\frac{1}{n}}_{\infty}$?
\end{remark}

\medskip
\medskip

\subsection{The case of isotropic log-concave measures.}

A measure $\mu$ on $\R^n$ is called log-concave if for every pair of Borel sets $A$ and $B,$ 
$$\mu(\lambda A+(1-\lambda)B)\geq \mu(A)^{\lambda}\mu(B)^{1-\lambda}.$$ 
In accordance with Borel's result, the density of $\mu$ has the form $f(x)=e^{-\varphi(x)},$ where $\varphi(x)$ is convex on $\R^n$. Let $X$ be a random vector on $\R^n$ distributed according to $\mu$. We say that $\mu$ is isotropic if $\E X=0$ and $\E\langle X,\theta\rangle^2=1$ for every $\theta\in\sfe.$ 

We refer the reader to Klartag \cite{Kl}, \cite{Kl2}, \cite{Kl3}, \cite{Kl4}, Klartag, Milman \cite{KM}, Paouris \cite{Pao1} or the books by Brazitikous, Giannopolous, Valettas, Vritsiou \cite{gian}, Koldobsky \cite{Kold}, and Artstein, Giannopolous, Milman \cite{AGMbook} for a comprehensive overview of the geometry of isotropic log-concave measures. 

To finish the proof of Proposition \ref{ub}, we shall need the following fact, known to the experts, which was proved e.g. by Klartag in \cite{Kl}, although it was not formulated explicitly there, and hence we outline the implication.

\begin{lemma}\label{klartag}
There exists an absolute constant $C_0>0$ such that for any isotropic log-concave measure $\mu$ with density $f$, letting $s=||f||_{\infty}e^{-C_0n},$ we have
$$R_{s}(f)\geq \frac{1}{10}.$$
\end{lemma}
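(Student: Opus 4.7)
\emph{Proof plan.} The plan is to combine Fradelizi's inequality with Ball's convex body $K_n(f)$ associated with a log-concave function, following Klartag \cite{Kl}.

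By Fradelizi's inequality, since $\mu$ has its barycenter at the origin (by isotropy), $f(0)\geq e^{-n}\|f\|_\infty$; in particular $0\in K_s$ as soon as $C_0\geq 1$, and the task reduces to exhibiting a Euclidean ball of radius $1/10$ around the origin inside $K_s$. I would therefore aim at a lower bound on $f$ on a small ball centered at $0$.

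The key tool is Ball's convex body
$$K_n(f):=\{x\in\R^n:\|x\|_{K_n(f)}\leq 1\},\qquad \|x\|_{K_n(f)}^{-n}:=n\int_0^\infty r^{n-1}\frac{f(rx)}{f(0)}\,dr,$$
which is convex whenever $f$ is log-concave (Ball). Integration in polar coordinates yields $|K_n(f)|=1/f(0)$, and a standard Laplace-type comparison of radial moments gives, for every unit $\theta$,
$$\int_{K_n(f)}\langle\theta,x\rangle^2 \,dx \;\approx\; \frac{1}{f(0)}\int\langle\theta,x\rangle^2 \,d\mu(x) = \frac{1}{f(0)},$$
with absolute constants. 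Together these say that the covariance of the uniform measure on $K_n(f)$ is an absolute multiple of $\mathrm{Cov}(\mu)=I$, so $K_n(f)$ is in isotropic position up to absolute constants. The classical fact that an isotropic convex body contains a Euclidean ball of radius bounded below by an absolute constant (Kannan--Lov\'asz--Simonovits, Milman--Pajor) then yields $cB_2^n\subset K_n(f)$ for some absolute $c>0$.

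To pass from $K_n(f)$ to $K_s$, I would argue as follows. For any $y\in\tfrac{1}{2}K_n(f)$ (in particular for $y\in\tfrac{c}{2}B_2^n$), the log-concave one-dimensional function $g(r):=f(ry)/f(0)$ satisfies $g(0)=1$ and $\int_0^\infty r^{n-1}g(r)\,dr\geq 2^n/n$. A standard estimate for log-concave functions on $[0,\infty)$ then yields $g(1)\geq (c')^n$ for an absolute $c'>0$; equivalently, $f(y)\geq (c')^n f(0)\geq (c')^n e^{-n}\|f\|_\infty$. Setting $C_0:=1-\log c'$ (absolute), this gives $\tfrac{c}{2}B_2^n\subset K_s$, hence $R_s(f)\geq c/2\geq 1/10$ after adjusting the absolute constants.

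The main technical obstacle is the pointwise bound $g(1)\geq (c')^n$ from the integral inequality $\int r^{n-1}g(r)\,dr\geq 2^n/n$: this is a delicate but standard Laplace-type argument, combining the exponential tail bound for log-concave functions past the mode with convexity of $-\log g$ near the origin. The identification of the covariance of $K_n(f)$ with that of $\mu$ requires a similar moment comparison. Both are carried out (implicitly) in Klartag \cite{Kl}, from which the present lemma may be extracted.
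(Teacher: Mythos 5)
Your proposal takes a genuinely different route from the paper. The paper's proof is a two-line black-box citation: it invokes Corollary 5.3 of Klartag \cite{Kl} to get $\mu(K_s(f))\geq 9/10$ for $s=\|f\|_\infty e^{-C_0 n}$, and then Lemma 5.4 of \cite{Kl} (any convex set of $\mu$-measure $\geq 9/10$ contains $\tfrac{1}{10}B_2^n$, by isotropicity) to conclude. You instead re-derive the inradius bound from scratch through Fradelizi's inequality, Ball's body $K_n(f)$, the inradius bound for isotropic convex bodies, and a radial Laplace estimate. This is roughly the machinery sitting inside Klartag's proofs of Cor.~5.3 and Lemma~5.4, so the two proofs are different at the surface but ultimately feed from the same well; the paper's version is shorter, yours is more self-contained.

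There is, however, a quantitative gap in your final step that you should fix. You want: for $y\in\tfrac12 K_n(f)$, the log-concave $g(r)=f(ry)/f(0)$ with $g(0)=1$ and $\int_0^\infty r^{n-1}g\geq 2^n/n$ must satisfy $g(1)\geq (c')^n$. This is false as stated, and the culprit is exactly the Fradelizi bound $\sup g\leq e^n$. Take $g$ piecewise log-linear with $\log g(r)=\tfrac{n}{R}r$ on $[0,R]$ (so $g(R)=e^n$) and $\log g(r)=n-a(r-R)$ for $r\geq R$, with $R\approx 0.8$ and $a$ arbitrarily large. Then $\int_0^R r^{n-1}g\approx R^n e^n/(2n)\geq 2^n/n$ already satisfies your constraint, while $g(1)=e^{n-a(1-R)}$ can be as small as you like. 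The constraint $\int\geq 2^n/n$ is simply too weak because the bulk $\int_0^1 r^{n-1}g\,dr$ can itself be as large as $e^n/n>2^n/n$, so the tail past $r=1$ is not forced to contribute. The fix is straightforward: take $y\in\lambda K_n(f)$ with $\lambda<1/(2e)$, say $\lambda=1/(3e)$, so that $\int_0^\infty r^{n-1}g\geq \lambda^{-n}/n > e^n/n\geq\int_0^1 r^{n-1}g\,dr$; then the tail must carry weight $\gtrsim (\lambda^{-n}-e^n)/n$, and the exponential bound $g(r)\leq g(1)e^{-a(r-1)}$ for $r\geq 1$ (concavity of $\log g$) together with $a\geq \ln(1/g(1))$ forces $g(1)\geq e^{-n}$. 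You do get $C_0$ larger and a smaller inradius than $c/2$ (closer to $c/(2e)$), so the stated constant $1/10$ would need to be relaxed to an unspecified absolute constant --- which is harmless for the lemma's use in the paper, but is still worth flagging.

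One further small point: your covariance comparison is cleaner for the body $K_{n+2}(f)$ than for $K_n(f)$; since the two differ only by an absolute dilation (Klartag--Milman), this is only a bookkeeping issue, but the clean statement $\mathrm{Cov}(\text{unif on }K_{n+2}(f))\approx \mathrm{Cov}(\mu)$ would make that paragraph airtight.
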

\begin{proof} Firstly, by Corollary 5.3 from Klartag \cite{Kl}, there exists a constant $C_1>0$ such that for any $\alpha>C_1$,
$$\mu(K_{||f||_{\infty}e^{-\alpha n}}(f))\geq 1-e^{-\alpha n/8}.$$
Consequently, for $C_0=\max(C_1, 8\log 10),$ we have
$$\mu(K_{s}(f))\geq \frac{9}{10},$$
where $s=||f||_{\infty}e^{-C_0n}$. The argument involves integration in polar coordinates and asymptotic estimates for the arising integrals.

Secondly, by Lemma 5.4 from Klartag \cite{Kl}, for any convex $K$ with $\mu(K)\geq \frac{9}{10}$, one has $\frac{1}{10}B_2^n\subset K.$ The argument is based on comparing the inradius of a level set to its maximal section, and uses isotropicity. 

The combination of these facts yields the lemma. 
\end{proof}

\begin{remark} 
One may arrive to the conclusion of Lemma \ref{klartag} in a variety of ways. For example, one may use the analysis of Ball's bodies, and argue along the lines of Klartag, Milman \cite{KM}. Unfortunately, neither of the ways seem to allow to get $|K|\cdot||f||_{\infty}\leq C$ while reducing $R_s$ by only a constant factor. 
\end{remark}

Observe that Lemma \ref{klartag}, combined with Lemma \ref{ub-general} yields the upper bound of $e^{Cn}$ for $\Gamma(X)$, when $X$ is an isotropic log-concave vector. Such a bound is very rough; in order to get a polynomial estimate, we will need an additional application of log-concavity.

\begin{lemma}\label{scaling}
For a log-concave density $f$, for any $t>0$, and any $\lambda\in [0,1]$, there exists $y\in\R^n$ such that
$$K_{t^{\frac{1}{\lambda}}||f||_{\infty}}\subset \frac{1}{\lambda} K_{t||f||_{\infty}}+y.$$
\end{lemma}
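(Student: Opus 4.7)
The approach is a one-line convexity argument anchored at the mode of $f$. Write $f = e^{-\varphi}$ with $\varphi$ convex, set $M := ||f||_{\infty}$, and fix $y^\ast \in \R^n$ at which $f$ attains its maximum, so $\varphi(y^\ast) = -\log M$. Existence of $y^\ast$ is standard for integrable log-concave densities, which are continuous on the interior of their support and tend to $0$ at infinity; if one prefers to avoid this, pick $y_\epsilon$ with $\varphi(y_\epsilon) \leq -\log M + \epsilon$ and let $\epsilon \to 0$ at the end. I claim that the vector $y := -\tfrac{1-\lambda}{\lambda}y^\ast$ does the job.

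The cases $\lambda = 1$ and $t \geq 1$ are trivial, so I may assume $\lambda \in (0,1)$ and $t \in (0,1)$. Given $w \in K_{t^{1/\lambda}M}(f)$, i.e.\ $\varphi(w) \leq -\tfrac{1}{\lambda}\log t - \log M$, form the convex combination
$$u := \lambda w + (1-\lambda) y^\ast.$$
Convexity of $\varphi$, together with $\varphi(y^\ast) = -\log M$, gives
$$\varphi(u) \leq \lambda \varphi(w) + (1-\lambda)\varphi(y^\ast) \leq -\log t - \log M = -\log(tM),$$
so $u \in K_{tM}(f)$. Solving the defining identity for $w$ yields $w = \tfrac{1}{\lambda}u - \tfrac{1-\lambda}{\lambda}y^\ast = \tfrac{1}{\lambda}u + y$, and hence $w \in \tfrac{1}{\lambda}K_{tM}(f) + y$, which is precisely the required inclusion.

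I do not foresee a real obstacle here; the only point that needs care is choosing the shift $y$ compatibly with the mode so that the convex combination $u$ lands in the target level set $K_{tM}$ rather than a deeper one. The computation is essentially log-concavity of $f$ applied to the pair $(w, y^\ast)$ with weights $(\lambda, 1-\lambda)$, which is the natural pairing because the mode absorbs the $||f||_{\infty}$ factor exactly, while the weight $\lambda$ converts the exponent $t^{1/\lambda}$ into $t$.
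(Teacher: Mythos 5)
Your proof is correct and is essentially the same argument as the paper's: both anchor the convex combination at a mode $x_{\max}$ of $f$, apply log-concavity (you phrase it as convexity of $\varphi=-\log f$, the paper works directly with $f(\lambda x+(1-\lambda)x_{\max})\geq f(x)^\lambda\|f\|_\infty^{1-\lambda}$), and read off the shift $y=-\tfrac{1-\lambda}{\lambda}x_{\max}$. Your handling of the edge cases and of existence of the mode is a small extra bit of care but does not change the route.
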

\begin{proof} Let us denote by $x_{max}$ the (or any) point for which $f(x_{max})=||f||_{\infty}$. By log-conavity, for any $\lambda\in [0,1]$, 
$$f(\lambda x+(1-\lambda)x_{max})\geq f(x)^{\lambda}f(x_{\max})^{1-\lambda},$$
and therefore, for any $t>0$,
$$
K_{t^{\frac{1}{\lambda}}||f||_{\infty}}=\{x: f(x)^{\lambda}||f||_{\infty}^{1-\lambda}\geq t||f||_{\infty}\}\subset 
$$
\begin{equation}\label{logconc}
\{x: f(\lambda x+(1-\lambda)x_{max})\geq t||f||_{\infty}\}=\frac{1}{\lambda} K_{t||f||_{\infty}}+y, 
\end{equation}
for some vector $y.$ The Lemma follows.
\end{proof}

\begin{remark} The example of $f=C_n e^{-||x||_1}$ shows that Lemma \ref{scaling} is in fact sharp, and so is part of the estimate below in which it is used.
\end{remark}

\textbf{Proof of the Proposition \ref{ub-lc}.}  Letting $\lambda=\frac{1}{n}$ and $t=e^{-C_0}$ in Lemma \ref{scaling}, with $C_0$ from Lemma \ref{klartag}, we observe:
$$\frac{1}{10n}B_2^n+z\subset K_{e^{-C_0}||f||_{\infty}},$$
for some vector $z\in\R^n.$ Therefore, letting $s=e^{-C_0}||f||_{\infty},$ we have
\begin{equation}\label{R_t}
R_{s}(f)\geq \frac{1}{10n}.
\end{equation}
Observe also that
\begin{equation}\label{vol}
|K_s(f)|\cdot ||f||_{\infty}\leq \frac{\mu(K_s)}{e^{-C_0}}\leq e^{C_0}.
\end{equation}
Combining Lemma \ref{ub-general} with (\ref{R_t}) and (\ref{vol}), we get, for every convex body $Q$:
$$\mu^+(\partial Q)\leq n\cdot10n\cdot (e^{C_0}+1)=C'n^2. \square$$

\medskip

We conclude the subsection by formulating the following

\begin{question}\label{question}
Let $\mu$ be an isotropic log-concave measure with density $f$. Does there exist a level set $K_t$ of $\mu$ such that 
$$|K_t|\leq \frac{C_1}{||f||_{\infty}},$$
and $C_2 B_2^n+y\subset K_t$, for some absolute constants $C_1$ and $C_2$ and a vector $y$?
\end{question}

In case the answer to this question is affirmative, then Lemma \ref{ub-general} yields the bound $\Gamma(\mu)\leq Cn$ for all isotropic log-concave measures $\mu$. Such a bound is attained by uniform measures on polytopes with few facets, such as cube and simplex.

\begin{observation}\label{obs}
In view of the above discussion, the answer to the Question \ref{question} is affirmative in the case when $\mu$ is an isotropic log-concave measure which additionally
\begin{itemize}
\item is uniform on a convex set, or
\item has homogenuous level sets, in particular if it has density of the form $C_ne^{-||\cdot||}$, or
\item is 1-symmetric, that is, it is invariant with respect to the symmetries of the cube.
\end{itemize}
Hence, in all those cases we have the sharp estimate $\Gamma(\mu)\leq Cn.$
\end{observation}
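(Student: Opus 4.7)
The overall strategy is to verify, in each of the three listed cases, that Question~\ref{question} has an affirmative answer; once a level set $K_t$ with $|K_t|\le C_1/\|f\|_\infty$ and $C_2 B_2^n + y \subset K_t$ is produced for absolute constants $C_1, C_2$, Lemma~\ref{ub-general} immediately yields $\mu^+(\partial Q) \le n(C_1+1)/C_2 = O(n)$ for every convex $Q$, which is the claimed bound $\Gamma(\mu) \le Cn$.

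The first two cases are straightforward. For the uniform case, the density $f = |K|^{-1}\1_K$ has only one nontrivial level set, namely $K$ itself, so $|K|\cdot\|f\|_\infty = 1$; the fact that an isotropic convex body contains a Euclidean ball of radius at least $1/10$ (Lemma 5.4 of~\cite{Kl}, invoked in the proof of Lemma~\ref{klartag}) supplies the inradius. For homogeneous level sets, every nonempty $K_t$ is a homothet $\lambda(t) L + c(t)$ of a single convex body $L$; take the level $s = \|f\|_\infty e^{-C_0 n}$ from Lemma~\ref{klartag}, so that $K_s$ has inradius $\ge 1/10$ and volume $\le e^{C_0 n}/\|f\|_\infty$, and then choose $t$ with $\lambda(t) = e^{-C_0}\lambda(s)$. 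Homogeneity scales inradius and volume by $e^{-C_0}$ and $e^{-C_0 n}$ respectively, yielding the required $C_2 = e^{-C_0}/10$ and $|K_t| \le 1/\|f\|_\infty$. This also covers the densities $C_n e^{-\|\cdot\|}$, whose level sets are dilates of the unit ball of the norm.

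The 1-symmetric case is the main obstacle. Here the level sets are 1-symmetric convex bodies centered at the origin, but, unlike in the homogeneous case, there is no a-priori shape to rescale. One needs a single level $t$ giving simultaneously volume $O(1/\|f\|_\infty)$ and inradius $\Omega(1)$. The plan is to use that for unconditional isotropic log-concave measures $\|f\|_\infty^{1/n} \asymp 1$ (Bobkov--Nazarov), so the target volume $C_1/\|f\|_\infty$ is itself exponentially small in $n$; consequently $t$ must be of the same order as $\|f\|_\infty$ up to an absolute constant only, rather than the factor $e^{-C_0 n}$ allowed in Lemma~\ref{klartag}. Concretely, take $t = c_0 \|f\|_\infty$: then $|K_t|\le 1/t = O(1/\|f\|_\infty)$ trivially, and by permutation and sign invariance the largest centered Euclidean ball in $K_t$ has radius $\sup\{r : f(re_1) \ge c_0 f(0)\}$. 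The crux is thus the one-dimensional-type claim that there exist universal constants $c_0, C_2 > 0$ with $f(C_2 e_1) \ge c_0 f(0)$ for every 1-symmetric isotropic log-concave density on $\R^n$. The difficulty is that the axial restriction $t\mapsto f(te_1)$ is not itself a probability density, so 1D isotropy does not directly apply; a viable route is to compare it with the 1D marginal $g_1(t) = \int f(t, x')\,dx'$, which is 1D isotropic log-concave symmetric and hence satisfies $g_1(1) \ge c\, g_1(0)$ by an elementary variance/log-concavity argument, and then to use Prekopa--Leindler on the $\R^{n-1}$-slices $f(0,\cdot)$ and $f(1,\cdot)$ together with their 1-symmetry to transfer the marginal ratio back to the pointwise axial values.
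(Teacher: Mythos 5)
Your treatment of the first two cases is correct and matches what the paper has in mind: for the uniform case, $K$ itself is the only nontrivial level set with $|K|\cdot\|f\|_\infty=1$ and inradius $\geq 1/10$ by isotropy; for homogeneous level sets, scaling down the level $K_s$ from Lemma~\ref{klartag} by the factor $e^{-C_0}$ converts the exponential volume overshoot $e^{C_0n}/\|f\|_\infty$ into a constant while only reducing the inradius by a constant. This is a clean and correct argument.

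The third case is where your proposal breaks down. You assert that ``by permutation and sign invariance the largest centered Euclidean ball in $K_t$ has radius $\sup\{r : f(re_1) \ge c_0 f(0)\}$,'' but this is false: the inradius of a 1-symmetric convex body need not be attained along a coordinate axis. For instance, the level sets of the 1-symmetric isotropic density $C_n e^{-\|x\|_1/c}$ are cross-polytopes, whose inscribed ball touches the boundary along the main diagonal (at Euclidean distance $\lambda/\sqrt{n}$ from the origin), not along $e_1$ (where the boundary sits at distance $\lambda$). So your reduction to the one-dimensional axial restriction $t\mapsto f(te_1)$ does not control the inradius, and the subsequent sketch (Bobkov--Nazarov plus a marginal comparison via Pr\'ekopa--Leindler) cannot be completed as stated. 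The argument the paper has in mind (signaled by the phrase ``analyzing the proof of Theorem~\ref{ub}'') is much shorter: since every level set $K_t$ of a 1-symmetric density is a 1-symmetric convex body, its John ellipsoid is automatically a centered Euclidean ball (the only ellipsoid invariant under coordinate permutations and sign changes), so the linear map $T$ in the proof of Theorem~\ref{ub} can be taken to be the identity. Choosing $t$ with $|K_t|\,\|f\|_\infty\in[1-\alpha,1+\alpha]$ by Lemma~\ref{existence}, Ball's symmetric volume-ratio bound $|L|\leq 2^n$ gives inradius $R_t\geq |K_t|^{1/n}/2\geq c\,\|f\|_\infty^{-1/n}$. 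Finally, $\|f\|_\infty^{1/n}$ (the isotropic constant) is bounded above by an absolute constant for unconditional --- in particular 1-symmetric --- isotropic log-concave measures (Bourgain), which yields $R_t\geq c'$ and answers Question~\ref{question} affirmatively with $y=0$. This last ingredient, the bounded isotropic constant in the unconditional case, is precisely the point your sketch is circling around but never lands on.
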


\medskip

\subsection{Another estimate.} We formulate also a generalization of Lemma \ref{simple}, which could be useful for estimating the surface area of convex sets with large inradii.

\begin{proposition}\label{ballinside}
Let $Q$ be a convex body in $\R^n$, containing $R B_2^n$. Then for every absolutely continuous measure $\mu$ with ray-decreasing density, 
$$\mu^+(\partial Q)\leq \frac{n\mu(Q)}{R}.$$
\end{proposition}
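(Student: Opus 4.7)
The plan is to combine the argument of Lemma \ref{simple} with a change-of-variables computation that exploits the ray-decreasing property of the density $f$.

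First I would observe that because $RB_2^n \subset Q$, we have $\epsilon B_2^n \subset \frac{\epsilon}{R} Q$, and hence, by convexity,
$$Q + \epsilon B_2^n \subset Q + \tfrac{\epsilon}{R} Q = (1 + \tfrac{\epsilon}{R}) Q.$$
Since $Q$ contains the origin, $Q \subset \lambda Q$ for every $\lambda \geq 1$, so the above inclusion gives
$$\mu\bigl((Q + \epsilon B_2^n) \setminus Q\bigr) \leq \mu\bigl((1+\tfrac{\epsilon}{R})Q\bigr) - \mu(Q).$$

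Next, setting $\lambda = 1 + \epsilon/R$, the change of variables $y = \lambda x$ yields
$$\mu(\lambda Q) = \int_{\lambda Q} f(y)\, dy = \lambda^n \int_Q f(\lambda x)\, dx.$$
Since $f$ is ray-decreasing and $\lambda \geq 1$, we have $f(\lambda x) \leq f(x)$ for every $x \in \R^n$, so
$$\mu(\lambda Q) - \mu(Q) \leq \lambda^n \int_Q f(x)\, dx - \mu(Q) = (\lambda^n - 1)\mu(Q).$$

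Combining these inequalities and passing to the limit,
$$\mu^+(\partial Q) \leq \liminf_{\epsilon \to 0} \frac{(1+\epsilon/R)^n - 1}{\epsilon}\,\mu(Q) = \frac{n\,\mu(Q)}{R},$$
which is the claim. There is no real obstacle here: the only point that requires a word of caution is verifying the ray-decreasing hypothesis enters exactly in the comparison $f(\lambda x) \leq f(x)$, which is why the estimate is stated for rays emanating from the center of the inscribed ball (i.e., the origin, after translation if needed). $\square$
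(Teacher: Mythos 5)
Your proof is correct, but it takes a genuinely different route from the paper. The paper parameterizes $Q$ by polar-type coordinates via the map $(y,t)\mapsto ty$ from $\partial Q\times[0,1]$, with Jacobian $t^{n-1}\langle y, n_y\rangle$, then writes $\mu(Q)$ as a double integral over $\partial Q\times[0,1]$ and bounds the inner radial integral from below by $\frac{1}{n}e^{-\varphi(y)}$ using the ray-decreasing hypothesis and $\langle y,n_y\rangle\geq R$; this relies on the surface-integral representation $\mu^+(\partial Q)=\int_{\partial Q}f\,d\sigma$. You instead work directly from the Minkowski-content definition of $\mu^+$, combining the inclusion $Q+\epsilon B_2^n\subset(1+\tfrac{\epsilon}{R})Q$ (as in Lemma \ref{simple}) with the observation that $\mu(\lambda Q)\leq\lambda^n\mu(Q)$ for $\lambda\geq 1$ because $f(\lambda x)\leq f(x)$ pointwise. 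Your argument is essentially the weighted analogue of Lemma \ref{simple} and is somewhat more elementary: it avoids both the Jacobian computation and the surface-integral formula for $\mu^+$, and uses convexity of $Q$ and star-shapedness of $f$ in a very transparent way. The one thing you give up is the slightly more refined intermediate bound (\ref{system1}), which separates the geometric factor $\min\langle y,n_y\rangle$ from the density factor and could in principle be sharpened when $\langle y,n_y\rangle$ is large on most of $\partial Q$; but for the stated proposition both arguments yield exactly $\frac{n\mu(Q)}{R}$.
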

\begin{proof}
Let $\mu$ have density $f(y)=e^{-\varphi(y)}$. For every $y\in\R^n$ consider a one-dimensional function $\varphi_y(t)=\varphi(\frac{y}{|y|}t)$. By our assumptions, this function is increasing.

Following an idea from Nazarov \cite{fedia}, consider a map $X:\partial Q\times \R^+\rightarrow \R^n$ defined as $X(y,t)=yt.$ The Jacobian of this map is $t^{n-1} \langle y,n_y\rangle$, where $n_y$ is the unit normal to $Q$ at $y$. Therefore, 
$$\mu(Q)=\int_{\R^n} e^{-\varphi(y)}dy=\int_{\partial Q}\int_0^{1} \langle y, n_y\rangle t^{n-1} e^{-\varphi(ty)} d\sigma(y) dt=$$
$$
\int_{\partial Q} e^{-\varphi(y)} \langle y, n_y\rangle \frac{\int_0^{|y|} t^{n-1} e^{\varphi_y(t)}dt}{|y|^n e^{-\varphi_y(|y|)}} d\sigma(y)\geq 
$$
\begin{equation}\label{system1}
\mu^+(\partial Q)\cdot \min_{y\in\partial Q} \langle y, n_y\rangle\cdot \min_{y\in\R^n} \frac{\int_0^{|y|} t^{n-1} e^{\varphi_y(t)}dt}{|y|^n e^{-\varphi_y(|y|)}}.
\end{equation}
Above we used the expression for the perimeter $\mu^+(\partial Q)=\int_{\partial Q} e^{-\varphi(y)}d\sigma(y).$ Note that $R B_2^n\subset Q$ implies 
\begin{equation}\label{ball-R}
\langle y, n_y\rangle\geq R.
\end{equation}
In addition, as $f$ is ray decreasing, we have $e^{-\varphi_y(t)}\geq e^{-\varphi_y(|y|)}$, for any $t\in [0, |y|]$. Therefore,
\begin{equation}\label{lbnd-y}
\frac{\int_0^{|y|} t^{n-1} e^{\varphi_y(t)}dt}{|y|^n e^{-\varphi_y(|y|)}}\geq \frac{\int_{0}^{|y|} t^{n-1}dt}{|y|^n}=\frac{1}{n}.
\end{equation}
Combining (\ref{system1}), (\ref{ball-R}) and (\ref{lbnd-y}), we arrive to the conclusion.
\end{proof}

\section{Appendix}

Firstly, we observe the following fact, which follows directly by compactness.

\begin{fact}\label{tech1}
Suppose $\mu$ is an absolutely continuous measure with bounded density, and $K$ is a convex set such that $\mu^+(\partial K)<\infty$. Then there exists a pair of positive numbers $c, C$ (that depends only on the density of $\mu$, the upper bound on $\mu^+(K),$ and the dimension) such that for every $\epsilon\in [0,c],$
$$\frac{1}{\epsilon}\mu\left((K+\epsilon B_2^n)\setminus K\right)<C.$$
\end{fact}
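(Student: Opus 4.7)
My plan is to prove this by a short real-analysis argument built on one geometric observation: the function $\phi(\epsilon):=\mu((K+\epsilon B_2^n)\setminus K)$ is monotonically non-decreasing in $\epsilon\geq 0$. This is forced by the trivial inclusion $K+\epsilon_1 B_2^n\subseteq K+\epsilon_2 B_2^n$ whenever $\epsilon_1\leq \epsilon_2$, together with the positivity of $\mu$. Neither the convexity of $K$ nor the boundedness of the density truly enters the core argument; both hypotheses serve only to ensure that $\phi(\epsilon)<\infty$ and that the $\liminf$ defining $\mu^+(\partial K)$ is meaningful.

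\medskip

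Writing $M:=\mu^+(\partial K)<\infty$, I would unpack the $\liminf$ to extract a rapidly decreasing sequence $\epsilon_0>\epsilon_1>\epsilon_2>\cdots$ tending to $0$ satisfying \emph{both} the geometric decay $\epsilon_{k+1}<\epsilon_k/2$ \emph{and} the pointwise control $\phi(\epsilon_k)<(M+1)\epsilon_k$ for every $k\geq 0$. Such a sequence exists because the finiteness of $\liminf_{\epsilon\to 0^+}\phi(\epsilon)/\epsilon$ guarantees that the set of ``good'' $\epsilon$'s, those for which $\phi(\epsilon)/\epsilon<M+1$, accumulates at $0$; hence I can iteratively pick $\epsilon_{k+1}$ from the intersection of this good set with the interval $(0,\epsilon_k/2)$.

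\medskip

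With the sequence in hand the interpolation is immediate. For any $\epsilon\in(0,\epsilon_0]$ there is a unique $k$ with $\epsilon\in(\epsilon_{k+1},\epsilon_k]$, and monotonicity of $\phi$ then gives
$$\phi(\epsilon)\leq \phi(\epsilon_k)<(M+1)\epsilon_k\leq 2(M+1)\epsilon_{k+1}\leq 2(M+1)\epsilon,$$
so $\phi(\epsilon)/\epsilon\leq 2(M+1)$ uniformly on $(0,\epsilon_0]$. Choosing $c=\epsilon_0$ and $C=2M+3$ delivers the statement.

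\medskip

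I do not expect any real obstacle. The ``compactness'' invoked in the paper's remark reduces, in the end, to the total ordering of the family $\{K+\epsilon B_2^n\}_{\epsilon\geq 0}$ under inclusion, which bootstraps even a single good $\epsilon$ into a uniform bound via the pigeonhole-style interpolation above. The one subtle point is that in this approach the threshold $c$ depends on the specific geometry of $K$ through the choice of $\epsilon_0$, not merely on the scalar $M$; if one insisted on a $c$ depending only on $M$, $\|f\|_\infty$ and $n$, a more quantitative Steiner-type input would be required. For the use made of the Fact in the paper, where $\mu$ and $K$ are fixed in advance, the monotonicity argument above is plainly sufficient.
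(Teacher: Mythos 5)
Your interpolation step contains an error that is fatal to the argument. You construct the sequence so that $\epsilon_{k+1}<\epsilon_k/2$, which is to say $\epsilon_k>2\epsilon_{k+1}$, yet the chain
$$\phi(\epsilon)\leq\phi(\epsilon_k)<(M+1)\epsilon_k\leq 2(M+1)\epsilon_{k+1}\leq 2(M+1)\epsilon$$
requires the reverse inequality $\epsilon_k\leq 2\epsilon_{k+1}$; the two are incompatible. Moreover this is not a fixable typo: even if you tried to impose $\epsilon_{k+1}\geq\epsilon_k/2$ so that the ratio $\epsilon_k/\epsilon_{k+1}$ stays bounded, the $\liminf$ hypothesis gives you no way to do it. Finiteness of $\liminf_{\epsilon\to 0}\phi(\epsilon)/\epsilon$ only says that the good set $\{\epsilon:\phi(\epsilon)/\epsilon<M+1\}$ accumulates at $0$; it can be extremely sparse, e.g.\ clustered near a sequence like $2^{-2^k}$, in which case consecutive good values have unbounded ratio and the pigeonhole interpolation collapses. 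Indeed, one can build a bounded density $f$ concentrated in thin slabs near such a sparse sequence of distances from a half-space $K$, producing $\liminf\phi(\epsilon)/\epsilon=0$ while $\limsup\phi(\epsilon)/\epsilon=\infty$; any proof that uses only monotonicity of $\phi$ and the $\liminf$ is therefore doomed.

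Your remark that ``neither the convexity of $K$ nor the boundedness of the density truly enters the core argument'' is exactly where the proposal goes astray: those two hypotheses are what the paper's cryptic ``by compactness'' is pointing at. For a compact convex $K$, the Steiner formula gives $|K+\epsilon B_2^n|-|K|=\sum_{j=1}^n\binom{n}{j}W_j(K)\,\epsilon^j$, a polynomial in $\epsilon$ with no constant term, whence $|(K+\epsilon B_2^n)\setminus K|\leq\epsilon\bigl(|K+B_2^n|-|K|\bigr)$ for all $\epsilon\leq 1$. Combined with the trivial bound $\phi(\epsilon)\leq\|f\|_\infty\,|(K+\epsilon B_2^n)\setminus K|$ this yields the desired estimate at once, with $c=1$ and $C=\|f\|_\infty\bigl(|K+B_2^n|-|K|\bigr)$. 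This geometric input is indispensable, and your monotonicity-plus-interpolation scheme supplies nothing in its place.
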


\begin{lemma}\label{justif}
In the proof of Theorem \ref{lb}, either the right hand side of (\ref{step1}) equals to the left hand side of (\ref{step2}), or the conclusion of Theorem \ref{lb} follows anyway.
\end{lemma}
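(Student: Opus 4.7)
The lemma asks me to justify three manipulations in the proof of Theorem \ref{lb}: moving $\lim_{\epsilon\to 0}$ outside $\E_Y$, exchanging $\E_X$ and $\E_Y$ by Fubini, and moving $\lim_{\epsilon\to 0}$ inside $\E_X$ but past $\E_Y$. My plan is to verify each one by Fubini or dominated convergence after reducing to a well-behaved setting, and to dispose of the single exceptional case by showing directly that the lower bound in Theorem \ref{lb} holds vacuously there.

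First I would reduce to the case $||f||_\infty < \infty$. If $f$ is essentially unbounded then for any $M$ there is a Lebesgue point $x_0$ with $f(x_0) > M$, and for small $r$ the ball $B(x_0,r)$ has $\mu^+(\partial B(x_0,r)) \geq \frac{1}{2} M c_n r^{n-1}$; letting $M \to \infty$ forces $\Gamma(X) = \infty$, so the theorem is trivial. Next I would reduce to compactly supported $f$: for $R > 0$ let $X_R := X\mid |X| \leq R$ have density $f_R = f \cdot 1_{|x|\leq R}/P(|X|\leq R)$. Since the hypothesis $\sqrt{Var(|X|)} \leq \alpha \E|X|$ forces $\E|X|^2 < \infty$, dominated convergence gives $\E|X_R| \to \E|X|$ and $Var(|X_R|) \to Var(|X|)$. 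Moreover, splitting the surface integral defining $\mu^+(\partial Q)$ along $\{|y|\leq R\}$ yields $\mu_R^+(\partial Q) \leq \mu^+(\partial Q)/P(|X|\leq R)$ for every convex $Q$, and hence $\Gamma(X) \geq P(|X|\leq R) \cdot \Gamma(X_R)$; sending $R \to \infty$ transfers the lower bound from $X_R$ to $X$.

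With $f$ bounded and supported in $B(0, R_0)$, the three interchanges follow from standard tools. The integrand $g_\epsilon := \frac{1}{\epsilon} 1_{\{\langle X, Y_1\rangle \in [\rho, \rho + \epsilon|Y_1|],\,\forall j \geq 2:\,\langle X, Y_j\rangle \leq \rho\}}$ is nonnegative, so Fubini exchanges $\E_X$ and $\E_Y$ unconditionally. To push $\lim_\epsilon$ through $\E_Y$, I would invoke the uniform bound $\E_X g_\epsilon \leq C ||f||_\infty R_0^{n-1}$: the density of $\langle X, Y_1\rangle$ at any point is at most $||f||_\infty$ times the $(n-1)$-volume of a hyperplane slice of $\mathrm{supp}(f)$ divided by $|Y_1|$, while the interval in question has length $\epsilon |Y_1|$. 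This constant bound is trivially $\E_Y$-integrable. To push $\lim_\epsilon$ through $\E_Y$ while keeping it inside $\E_X$, I would use the polar decomposition $Y_1 = a e_X + Y_1^\perp$ with $a \sim N(0,1)$ independent of the $(n-1)$-dimensional standard Gaussian $Y_1^\perp$, producing the uniform-in-$\epsilon$ bound $\frac{1}{\epsilon} \E_Y g_\epsilon \leq C \frac{\sqrt n}{|X|} e^{-\rho^2/(2|X|^2)}$, which is a bounded function of $|X|$ (it peaks near $|X| = \rho$ and vanishes at both ends) and hence $\E_X$-integrable.

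The main obstacle is the truncation step, since $\Gamma(X_R)$ has no evident monotonicity in $R$. The rescue is the elementary inequality $\mu_R^+(\partial Q) \leq \mu^+(\partial Q)/P(|X| \leq R)$ obtained by discarding the contribution of $\partial Q \cap \{|y| > R\}$ on which $f_R$ vanishes; combined with the continuity of $\E|X|$ and $Var(|X|)$ under truncation, it is what converts the theorem for each $X_R$ into the theorem for $X$.
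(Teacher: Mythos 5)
Your handling of the core interchanges---Fubini for nonnegative integrands, then dominated convergence twice, with the explicit dominating bounds $\E_X g_\epsilon \leq ||f||_\infty \omega_{n-1}R_0^{n-1}$ and $\frac{1}{\epsilon}\E_Y g_\epsilon \leq C\frac{\sqrt n}{|X|}e^{-\rho^2/(2|X|^2)}$---is sound and in fact more concrete than the paper, which at this point simply invokes its Fact~\ref{tech1}. Your truncation step $X_R := X\,|\,\{|X|\leq R\}$ together with the inequality $\mu_R^+(\partial Q)\leq\mu^+(\partial Q)/P(|X|\leq R)$, and the observation that the hypothesis forces $\E|X|^2<\infty$ so that $\E|X_R|\to\E|X|$ and $\mathrm{Var}|X_R|\to\mathrm{Var}|X|$, is also correct and a reasonable way to pass to compact support.

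The genuine gap is the opening reduction: the claim that $||f||_\infty=\infty$ forces $\Gamma(X)=\infty$ is false, and your own estimate already shows why. You obtain $\mu^+(\partial B(x_0,r))\geq\frac12 M c_n r^{n-1}$ only for $r$ small enough that the Lebesgue averaging around $x_0$ has kicked in, so as $M\to\infty$ the admissible radii $r=r(M)$ shrink, and nothing prevents $M\,r(M)^{n-1}\to 0$. A concrete counterexample in $\R^2$: take $f = c\sum_k 2^k\,\mathbf{1}_{B(x_k,4^{-k})}$ with the $x_k$ spaced out on a circle of radius $1$ so that the balls are disjoint. Then $||f||_\infty=\infty$, the support lies in the annulus $\{3/4\leq|x|\leq 5/4\}$ so the hypothesis $\sqrt{\mathrm{Var}|X|}\leq\alpha\E|X|$ holds with some $\alpha<1$, yet for every convex $Q$ one has $|\partial Q\cap B(x_k,4^{-k})|_1\leq 2\pi\cdot 4^{-k}$ (since $\partial Q\cap B_\rho\subset\partial(Q\cap B_\rho)$ and $Q\cap B_\rho\subset B_\rho$ are nested convex sets), hence $\mu^+(\partial Q)=\int_{\partial Q}f\leq 2\pi c\sum_k 2^{-k}<\infty$ uniformly in $Q$. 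So $\Gamma(X)<\infty$ here. The paper takes a different route for the unbounded case: it does not claim $\Gamma=\infty$; instead it replaces $f$ by the bounded density $\tilde f(x)=2f(x)\mathbf{1}_{\{f\leq m_f\}}(x)$ (with $m_f$ a median of $f(X)$) and uses $\mu^+(\partial K)\geq\frac12\tilde\mu^+(\partial K)$ to argue with $\tilde\mu$ in place of $\mu$. You would need either to adopt that truncation-in-height device, or to incorporate a height cutoff into your $X_R$ truncation, to make your opening reduction valid.
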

\begin{proof}
Without loss of generality we assume that there exists a constant $C_n$, possibly depending on the dimension $n$ and the distribution $\mu,$ such that for every convex body $K\subset\R^n,$ we have $\mu^+(\partial K)\leq C_n:$ indeed, otherwise there is nothing to prove.

If the density of $\mu$ is unbounded, consider the probability measure $\tilde{\mu}$ with density 
$$\tilde{f}(x)=2f(x)\cdot 1_{\{f\leq m_f\}}(x),$$
where $m_f$ is such a number that 
$$\mu(\{x:\,f(x)\leq m_f\})=\mu(\{x:\,f(x)\geq m_f\})=\frac{1}{2}.$$
Then $\tilde{\mu}$ is an absolutely continuous probability measure with bounded density, and for every convex body $K,$ we have $\mu^+(\partial K)\geq\frac{1}{2}\tilde{\mu}^+(\partial K)$. Therefore, without loss of generality we may assume that the density of $\mu$ is bounded (or else pass to $\tilde{\mu}$).

In view of the above assumptions, we may use Fact \ref{tech1}, which enables us to apply the dominated convergence Theorem, and interchange the limit and the expectation $\E_Y$. By Fubbini's theorem, we may also interchange $\E_Y$ and $\E_X.$ Using dominated convergence one more time, we interchange $\E_X$ and the limit.
\end{proof}

\end{document}